\documentclass[10pt]{article}
\usepackage{geometry}

\usepackage{amssymb}
\usepackage{amsmath}
\usepackage{amsthm}
\usepackage[dvipsnames]{xcolor}
\usepackage[shortlabels]{enumitem}
\usepackage[normalem]{ulem}
\usepackage{thmtools, thm-restate}
\usepackage{hyperref}
\usepackage{cleveref}
\usepackage{comment}

\newcommand{\CC}{\mathbb{C}}
\newcommand{\RR}{\mathbb{R}}
\newcommand{\ZZ}{\mathbb{Z}}
\newcommand{\NN}{\mathbb{N}}
\newcommand{\Symm}{\mathfrak{S}}
\newcommand{\op}{\operatorname}
\newcommand{\fix}{\op{fix}}
\newcommand{\GL}{\op{GL}}
\newcommand{\codim}{\op{codim}}
\newcommand{\codimfix}{\codim\fix}
\newcommand{\cdf}{\mathrm{cdf}}
\newcommand{\id}{\mathrm{id}}
\newcommand{\lr}{\ell_R}
\newcommand{\lR}{\lr}
\newcommand{\supp}{\operatorname{Supp}}
\newcommand{\rank}{\operatorname{rank}}
\newcommand{\wt}{\operatorname{wt}}

\newcommand{\vmax}{v_{\mathrm{max}}}
\newcommand{\ol}{\overline}
\newcommand{\exc}{\op{exc}}
\newcommand{\Ra}{{\,$\Rightarrow$\,}}

\newtheorem{thm}{Theorem}[section]
\newtheorem{prop}[thm]{Proposition}
\newtheorem{lem}[thm]{Lemma}
\newtheorem{lemma}[thm]{Lemma}
\newtheorem{cor}[thm]{Corollary}

\theoremstyle{definition}
\newtheorem*{defn}{Definition}

\newtheorem{ex}[thm]{Example}
\newtheorem{remark}[thm]{Remark}

\bibliographystyle{plainurl}

\newcommand{\FindStat}[1]{\href{https://www.findstat.org/StatisticsDatabase/#1}{#1}}

\title{Coincidences between intervals in two partial orders on complex reflection groups}
\author{Joel Brewster Lewis and Jiayuan Wang}

\begin{document}
\maketitle

\begin{abstract}
    In a finite real reflection group, the reflection length of each element is equal to the codimension of its fixed space, and the two coincident functions determine a partial order structure called the absolute order.  In complex reflection groups, the reflection length is no longer always equal to the codimension of fixed space, and the two functions give rise to two different partial orders on the group.  We characterize the elements $w$ in the combinatorial family $G(m, p, n)$ of complex reflection groups for which the intervals below $w$ in these two posets coincide. We also explore the relationship between this property and other natural properties of elements in complex reflection groups; some general theory of posets arising from subadditive functions on groups; and the particular case of subadditive functions on the symmetric group.
\end{abstract}

\section{Introduction}

Suppose that $G$ is a group and $f: G \to \RR_{\geq 0}$ is a subadditive function (that is, $f(xy) \leq f(x) + f(y)$ for all $x, y \in G$) such that $f(x) = 0$ if and only if $x$ is the identity in $G$.  It is easy to show (see Proposition~\ref{prop:poset}) that such a function gives rise to a partial order $\leq_f$ on $G$: one has
\[
x \leq_f y \qquad \Longleftrightarrow \qquad f(x) + f(x^{-1}y) = f(y).
\]
Two sources of such functions $f$ naturally present themselves, one algebraic and one geometric.  First, if $T$ is any generating set of $G$, then the \emph{$T$-length}
\begin{align*}
\ell_T: G & \to \NN \\
x & \mapsto \min\{k : \exists t_1, \ldots, t_k \in T \text{ such that } x = t_1 \cdots t_k\}
\end{align*}
has the requisite properties.  The resulting partial order $\leq_{\ell_T}$ may equivalently be characterized by saying that $x \leq_{\ell_T} y$ if $x$ lies along a path of minimum length from the identity to $y$ in the Cayley graph of $G$ generated by $T$, or that each minimum-length $T$-word $x = t_1 \cdots t_{\ell_T(x)}$ for $x$ can be extended to a minimum-length $T$-word $y = t_1 \cdots t_{\ell_T(y)}$ for $y$.

Second, if $G$ acts on a finite-dimensional vector space $V$ (i.e., by choice of a linear representation), with the element $g$ having \emph{fixed space} $\fix(g) := \ker(g - 1)$, one has \cite[Prop.~2.9]{HLR} that the fixed space codimension $\codimfix(g) := \dim V - \dim \fix(g)$ is subadditive.  Moreover, we have $\codimfix(g) = 0 \Longleftrightarrow x = \id$ precisely when the representation of $G$ is faithful.  In this case, we denote by $\leq_{\cdf}$ the resulting partial order.

It is reasonable to inquire when the two subadditive functions, and hence the two partial orders, just defined (one by a set of generators, the other by a choice of representation) coincide.  In \cite[Prop.~2.11]{HLR}, it was shown that a necessary condition for this equality is that $G$ be a reflection group, with $T = R$ its subset of reflections.  In this case, it's easy to see that 
\begin{equation}
\label{eq:codim < lR}
    \codimfix(g) \leq \lR(g) \text{ for all } g \in G.  
\end{equation}
It has been known for fifty years that if $G$ is a \emph{real} reflection group (i.e., a finite Coxeter group), then in fact $\codimfix(g) = \lR(g)$ for all $g$ in $G$ \cite[Lem.~2]{Carter}.  The same holds true in the complex reflection group $G(m, 1, n)$ (the wreath product $(\ZZ/m\ZZ) \wr \Symm_n$) \cite[Rem.~2.3(1)]{Shi2007}, as well as various other natural groups \cite{BradyWatt,Dieudonne, HLR}, but it does \emph{not} hold in the other finite complex reflection groups \cite{FosterGreenwood}.

Even when the whole partial orders $(G, \leq_{\cdf})$ and $(G, \leq_{\lR})$ do not coincide, certain important pieces of them may.  In \cite[Cor.~6.6]{LM2021}, it was shown that when $G$ is a well generated complex reflection group and $c$ is a Coxeter element in $G$, the two intervals $[\id, c]_{\cdf}$ and $[\id, c]_{\lR}$ are identical---they are the \emph{noncrossing partition lattice} of $G$.  This naturally raises the question \cite[Q.~8.11]{LM2021} of which other elements have this property.  The main result of this paper is to characterize these elements in the infinite family $G(m, p, n)$ of irreducible complex reflection groups.  We do this in terms of the combinatorial description of the groups $G(m, p, n)$---see Section~\ref{sec:background} for details.

\begin{restatable}{thm}{maintheorem}
\label{thm:main}
An element $w \in G(m, p, n)$ satisfies $[\id, w]_{\lr} = [\id, w]_{\cdf}$ if and
only if (1) the cycle weights of $w$ that are not $0 \pmod{p}$ can be partitioned into pairs that sum to $0$ (equivalently, $\lR(w) = \codimfix(w)$),
\textup{and} (2) any subset of cycle weights that sums to $0 \pmod{p}$ is a disjoint union of some weights that are $0 \pmod{p}$ and some pairs of weights that sum to $0$.
\end{restatable}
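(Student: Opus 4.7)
My plan has three parts.

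\textbf{Reduction to a heritability condition.} The subadditivity of $\codimfix$ combined with the inequality $\codimfix(g) \leq \lR(g)$ gives, for any factorization $w = xy$, the chain $\codimfix(w) \leq \codimfix(x) + \codimfix(y) \leq \lR(x) + \lR(y)$. If $\lR(w) = \codimfix(w)$, then $x \leq_{\lr} w$ forces this chain to collapse to equalities, which both puts $x$ in $[\id, w]_{\cdf}$ and forces $\lR(x) = \codimfix(x)$. Running the same chain starting from $x \leq_{\cdf} w$ gives the converse inclusion. Thus, once the known cycle-weight pairing criterion is invoked to identify the first condition of the theorem with ``$\lR(w) = \codimfix(w)$'', the claim reduces to the statement: among $w$ satisfying condition~(1), condition~(2) is equivalent to the heritability property that every $x \in [\id, w]_{\cdf}$ also has $\lR(x) = \codimfix(x)$.

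\textbf{A structural lemma on $\cdf$-intervals.} The key combinatorial input I would use is a structural lemma describing elements $x \leq_{\cdf} w$: within each $w$-cycle $C_i$ of weight $z_i$, the sub-cycles of $x$ contained in $C_i$ have weights forming the multiset $\{0,\ldots,0\}$ or the multiset $\{z_i,0,\ldots,0\}$ (exactly one sub-cycle carries weight $z_i$, all others weight $0$). This lemma follows from the noncrossing-factorization structure of $\cdf$-intervals in $G(m,1,n)$, which also describes $\cdf$-intervals in $G(m,p,n)$ by restriction since $\codimfix$ is insensitive to the ambient group. Establishing this lemma is the main technical obstacle of the proof. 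I expect to do so by working cycle-by-cycle: the equality $\codimfix(x|_{C_i}) + \codimfix(y|_{C_i}) = \codimfix(w|_{C_i})$ together with the fact that a sub-cycle of $x$ with nonzero weight contributes its full length to $\codimfix(x|_{C_i})$ gives tight budget constraints, which should pin down the multiset of sub-cycle weights by a short induction on cycle length, or directly from the combinatorial description of $\cdf$-intervals below a single cycle.

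\textbf{Necessity and sufficiency of condition~(2).} With the lemma in hand, the rest is short. For necessity, a subset $S$ of cycle weights of $w$ with $\sum_{z \in S} z \equiv 0 \pmod p$ gives a ``coarse'' element $x_S$, equal to $w$ on the $w$-cycles indexed by $S$ and the identity elsewhere; by $\codimfix$-additivity on disjoint supports together with the subset-sum condition, $x_S \in [\id, w]_{\cdf}$, and its cycle weights are precisely $S$ together with some zeros. If $S$ violates condition~(2), the cycle-weight criterion gives $\lR(x_S) > \codimfix(x_S)$, violating heritability. For sufficiency, the structural lemma says that for any $x \leq_{\cdf} w$, the set $S_x$ of indices $i$ for which $x$ carries weight $z_i$ on $C_i$ determines the nonzero-mod-$p$ cycle weights of $x$ as exactly those $z_i$ with $i \in S_x$ and $z_i \not\equiv 0 \pmod p$; the condition $x \in G(m,p,n)$ ensures $\sum_{i \in S_x} z_i \equiv 0 \pmod p$, so condition~(2) applied to $S_x$ produces the required pairing, giving $\lR(x) = \codimfix(x)$.
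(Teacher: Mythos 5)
Your overall architecture is reasonable and in one respect cleaner than the paper's: the reduction of the reverse inclusion to the heritability statement ``$\lR(x)=\codimfix(x)$ for every $x\in[\id,w]_{\cdf}$'' is valid (it also needs the antiautomorphism $x\mapsto x^{-1}w$ of $[\id,w]_{\cdf}$ to handle the cofactor, but that is Proposition~\ref{prop:conjugacy}), and your necessity argument for condition~(2) via the restricted element $x_S$ is exactly the paper's Proposition~\ref{prop: second condition}. The genuine gap is that your structural lemma is false as stated. You assume that every cycle of $x\leq_{\cdf}w$ is contained in a single cycle of $w$, so that the cycle weights of $x$ can be read off $w$-cycle by $w$-cycle. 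Take $w=[\id;(a,-a)]$ with $a\neq 0$ and $x=[(1\ 2);(0,0)]$: then $\codimfix(x)+\codimfix(x^{-1}w)=1+1=2=\codimfix(w)$, so $x\leq_{\cdf}w$, yet the $2$-cycle of $x$ meets both cycles of $w$. This already happens in $G(m,1,n)$, so it cannot be ruled out by appealing to the structure of intervals there. What is actually true (Propositions~\ref{prop:general bound} and~\ref{prop:partition for interval}) is that the cycles of $x$ induce a partition of the cycles of $w$ into singletons and pairs of cycles with nonzero weights summing to $0$; your weight description holds only on the singleton parts, while on the pair parts every restricted cycle of $x$ (and of $x^{-1}w$) has weight $0$. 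Establishing this is not a short induction on cycle length: it rests on the transitive-factorization inequality $c(\sigma_1)+c(\sigma_2)\leq N-c(\sigma_1\sigma_2)+2$ applied to each induced part, followed by a counting argument that forces the parts to be singletons or zero-sum pairs. The consequence you actually use downstream---that the multiset of cycle weights of $x$ is a submultiset of that of $w$ together with zeros, indexed by an honest subset $S_x$ of the cycles of $w$ of total weight $0\pmod p$---is correct, and with it your finish (apply condition~(2) to $S_x$, invoke Corollary~\ref{cor:shi}) does work and is shorter than the paper's route, which instead factors $u$ and $w$ over an auxiliary null cycle partition and handles single cycles and zero-sum pairs separately.

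A second, smaller gap: you never prove that condition~(1) is necessary. Your reduction assumes $\lR(w)=\codimfix(w)$ on both sides of the equivalence, so you have only proved the theorem for elements already satisfying condition~(1); if $\lR(w)>\codimfix(w)$, the two intervals could a priori still coincide as sets. The paper rules this out (Proposition~\ref{prop: first condition}) by exhibiting a reflection $t$ with $t\leq_{\lR}w$ but $t\not\leq_{\cdf}w$, namely a transposition merging two cycles that lie in a common part of a maximum null cycle partition: this drops $\lR$ by one but leaves $\codimfix$ unchanged. Some such argument must be supplied.
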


The bulk of this paper is devoted to the proof of Theorem~\ref{thm:main}.  In Section~\ref{sec:background}, we introduce the necessary background definitions and notations, and discuss some general properties of posets determined by subadditive functions on groups.  After this, we divide the proof of Theorem~\ref{thm:main} in several stages.  In Section~\ref{sec:necessary}, we prove the given conditions are necessary by explicitly constructing elements that belong to one interval but not the other when the conditions are not met.  To show that they are sufficient, we develop in Section~\ref{sec:sufficient} a detailed combinatorial description of the interval $[\id, w]_{\cdf}$ for $w \in G(m, p, n)$, allowing us to establish that $u \in [\id, w]_{\cdf} \Longrightarrow u \in [\id, w]_{\lR}$ when $w$ satisfies the necessary conditions.  We end in Section~\ref{sec:further remarks} with a number of closing remarks, including explorations of the heritability of the relation $[\id, w]_{\lR} = [\id, w]_{\cdf}$ in arbitrary complex reflection groups, the extent to which special classes of elements (the regular elements and the parabolic quasi-Coxeter elements) have this property, and the question of which other permutation statistics on the symmetric group $S_n$ are subadditive.

\section{Background}
\label{sec:background}

\subsection{The infinite family of complex reflection groups}

Say that a linear transformation $t$ on a vector space $V$ is a \emph{reflection} if its \emph{fixed space} 
\[
\fix(t) := \{v \in V: t(v) = v\} = \ker(t - 1)
\]
has codimension $1$, and that a finite subgroup $W \subset \GL(V)$ is a \emph{reflection group} if $W$ is generated by its subset of reflections.  Complex reflection groups (i.e., those for which the field of scalars of $V$ is $\CC$) were classified by Shephard and Todd \cite{ShephardTodd}: every complex reflection group is a direct product of irreducible groups, and each irreducible group either belongs to an infinite family $G(m, p, n)$ for positive integers $m$, $p$, $n$ with $p \mid m$, or is one of $34$ exceptional examples.\footnote{This description elides a few technicalities; among these is that the definition of $G(1, 1, n)$ given below (as the set of $n \times n$ permutation matrices) is not irreducible in its action on $\CC^n$, because it acts trivially on the subspace $\op{span}\{ (1, 1, \ldots, 1)\}$.  These technicalities will not play a role in what follows.}  This paper is concerned primarily with the groups of the infinite family; we describe them now.

By appropriate choice of basis, the group $G(m, 1, n)$ may be realized concretely as the group of $n \times n$ monomial matrices whose nonzero entries are $m$th roots of $1$.  Algebraically, $G(m, 1, n)$ is the wreath product $(\ZZ/m\ZZ) \wr \Symm_n$ of the cyclic group of order $m$ with the symmetric group $\Symm_n$.  Thus, its elements may be represented by a pair $w = [u; a]$ where $u \in \Symm_n$ is the \emph{underlying permutation} of $w$ and $a = (a_1, \ldots, a_n) \in (\ZZ/m\ZZ)^n$ is its tuple of \emph{weights}.  A \emph{cycle} of $w$ simply means a cycle of its underlying permutation; in particular, we consider fixed points to be cycles (of size $1$), and we denote by $c(w)$ the number of cycles of $w$.
For any subset $I \subseteq [n]$, we say that the weight of $I$ (relative to $w$) is $\sum_{i \in I} a_i$.  This notion will be especially relevant when $I$ is (the underlying set of) a cycle of $w$ or a collection of cycles of $w$.  In particular, we denote by $\wt(w)$ the weight $\sum_{i = 1}^n a_i$ of $w$.  For $p \mid m$, the group $G(m, p, n)$ is the normal subgroup of $G(m, 1, n)$ consisting of all those elements whose weight is a multiple of $p$.

Let $\zeta = \exp(2\pi i/m)$.  If $w = [u; a]$ is an element of $G(m, p, n)$ and $(x_1 \cdots x_k)$ is a weight-$0$ cycle of $w$, the vector in $\CC^n$ whose $x_i$th entry is $\zeta^{a_{x_1} + \ldots + a_{x_{i-1}}}$ for $i = 1, \ldots, k$ and whose other entries are $0$ is easily seen to be fixed by the action of $w$.  In fact, the collection of such vectors (taken over all weight-$0$ cycles of $w$) span $\fix(w)$, and consequently $\codimfix(w) = n - c_0(w)$, where $c_0(w)$ represents the number of weight-$0$ cycles of $w$.  Considering the case that $\codimfix(w) = 1$, we see that there are two flavors of reflection in $G(m, p, n)$: first, for any $i \neq j$ in $\{1, \ldots, n\}$ and any $a \in \ZZ/m\ZZ$, the element
\[
[(i \ j); (0, \ldots, 0, a, 0, \ldots, 0, -a, 0, \ldots, 0)]
\]
with $a_i = a$ and $a_j = -a$ and $a_k = 0$ if $k \neq i, j$ is a \emph{transposition-like reflection}.  Second, if $p < m$, then for any $k$ in $\{0, 1, \ldots, m/p - 1\}$, the element
\[
[\id; (0, \ldots, 0, kp, 0, \ldots, 0)]
\]
is a \emph{diagonal reflection} (where the nonzero weight may occur in any of the $n$ positions).

Any real reflection group may be complexified by extension of scalars, yielding a complex reflection group.  In particular, the four infinite families of real reflection groups are all realized inside the infinite family $G(m, p, n)$:
\begin{itemize}
    \item the symmetric group $\Symm_n$ is $G(1, 1, n)$ (type A);
    \item the hyperoctahedral group of signed permutations of degree $n$ is $G(2, 1, n)$ (type B/C);
    \item its normal subgroup of even-signed permutations is $G(2, 2, n)$ (type D); and
    \item the dihedral group of order $2 \times m$ is $G(m, m, 2)$ (type I).
\end{itemize}

\subsection{Cycle partitions and reflection length}

In any reflection group $W$ with reflections $R$, the \emph{reflection length} $\lR(w)$ of an element $w$ is defined to be
\[
\lR(w) = \min\{k : \exists t_1, \ldots, t_k \in R \text{ s.t. } w = t_1 \cdots t_k\}.
\]
As mentioned in the introduction, when $W$ is a real reflection group or the group $G(m, 1, n)$, we have $\lR(w) = \codimfix(w)$ for all $w \in W$.  For the other groups $G(m, p, n)$ in the combinatorial family, a formula for reflection length was given by Shi.  In order to state it, we need some additional terminology.

Given a finite set $S$, a \emph{(set) partition} of $S$ is a collection of disjoint nonempty sets whose union is $S$.  The elements of the partition are called its \emph{parts}.  We use the following notation for set partitions:
$
[ 1 \ 3 \mid 2 \mid 4]
$
represents the set partition whose three parts are $\{1, 3\}$, $\{2\}$, and $\{4\}$.  The same set partition could be written many different ways, e.g., as $[2 \mid 3 \ 1 \mid 4]$.  

We will frequently deal with set partitions $\Pi$ of the set of cycles of an element $w$ of $G(m, p, n)$.
Let $w\in G(m,p,n)$ with cycles $C_1, \ldots, C_k$. We say that a set partition $\Pi$ on $C_1, \ldots, C_k$ is a \emph{null cycle partition}\footnote{In \cite{LW}, these partitions of the cycles were called simply ``cycle partitions''.} if, for every part in $\Pi$, the weights of its cycles sum to $0 \pmod{p}$. For every null cycle partition $\Pi$ of $w$, the \emph{value} $v(\Pi)$ is defined to be 
\[
v(\Pi):= |\Pi| + v_m(\Pi), 
\] 
where $|\Pi|$ denotes the number of parts of $\Pi$ and $v_m(\Pi)$ denotes the number of parts of $\Pi$ whose cycle weights sum to $0$ (not just $0\pmod{p})$. For a fixed element $w\in G(m,p,n)$, there can be many null cycle partitions with many different values. Let 
\[
\vmax(w):=\mathrm{max}\{v(\Pi): \Pi \text{ is a null cycle partition for } w\}
\] 
be the maximum value of any null cycle partition of $w$; we say that the cycle partitions that realize $\vmax(w)$ are its \emph{maximum (null) cycle partitions}.  Then we have the following formula for reflection length. 

\begin{thm}[Shi {\cite[ Thm.~4.4]{Shi2007}}]
\label{thm: reflection length}
   For $w\in G(m,p,n)$, its reflection length is \[\lr(w)=n+c(w)-\vmax(w).\] 
\end{thm}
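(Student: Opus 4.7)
The plan is to prove both directions of the formula by establishing matching upper and lower bounds.

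For the upper bound $\lr(w)\le n+c(w)-\vmax(w)$, I would fix a null cycle partition $\Pi$ attaining $v(\Pi)=\vmax(w)$ and construct a reflection factorization of $w$ by handling each part independently. Given a part $P$ consisting of cycles $C_1,\ldots,C_j$ of combined size $s_P=\sum|C_i|$ and weight sum $\sigma_P$ (necessarily $\equiv 0\pmod p$), I would first merge the $s_P$ singletons on $\bigcup\supp(C_i)$ into one cycle of size $s_P$ and weight $0$ using $s_P-1$ transposition-like reflections, choosing the cyclic order so that the supports of $C_1,\ldots,C_j$ appear as consecutive arcs; then, if $\sigma_P\ne 0$, insert one diagonal reflection of weight $\sigma_P$, which is a legal reflection of $G(m,p,n)$ by the divisibility hypothesis; and finally perform $j-1$ transposition-like splits at the arc boundaries, distributing weights so that the resulting cycles match $C_1,\ldots,C_j$ (there are $j-1$ free weight parameters and $j-1$ constraints, since the last cycle's weight is forced by the others). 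The cost on part $P$ is $s_P+j-2+[\sigma_P\ne 0]$, and summing over parts gives $n+c(w)-|\Pi|-v_m(\Pi)=n+c(w)-\vmax(w)$.

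For the lower bound, let $w=t_1\cdots t_\ell$ be an arbitrary factorization, with $r$ transposition-like and $d$ diagonal reflections. Form the multigraph on $[n]$ whose edges are the supports of the transposition-like $t_i$, and let $\Pi$ be the partition of the cycles of $w$ induced by the connected components of this multigraph. I would check that $\Pi$ is a null cycle partition: each component $B$ is $w$-stable (no reflection outside $B$ touches its elements) and hence a union of cycles of $w$, and the total cycle weight in $B$ equals the sum of weights of the diagonal reflections supported in $B$, which lies in $p\ZZ/m\ZZ$. Writing $s_B,b_B,\sigma_B,r_B,d_B$ for the per-block size, cycle count, weight sum, and reflection counts, the bound $d_B\ge[\sigma_B\ne 0]$ is immediate (only diagonals contribute nonzero weight), and since the $r_B$ transposition-like reflections restricted to $B$ yield a transitive transposition factorization of the underlying permutation of $w|_B\in\Symm_{s_B}$ (a permutation with $b_B$ cycles on $s_B$ letters), the classical Hurwitz genus inequality gives $r_B\ge s_B+b_B-2$. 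Summing the resulting per-block bound over blocks produces $\ell\ge\sum_B(s_B+b_B-2+[\sigma_B\ne 0])=n+c(w)-v(\Pi)\ge n+c(w)-\vmax(w)$.

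The main obstacle is the per-block inequality $r_B\ge s_B+b_B-2$ for transitive transposition factorizations; everything else is straightforward bookkeeping. Although one can quote the Hurwitz genus formula, a self-contained proof goes by induction on $r_B$: remove the last transposition $t=(x\ y)$ and consider whether the remaining multigraph is still connected. If it is, then the partial product has $b_B\pm 1$ cycles on $B$ and the inductive hypothesis closes both cases. If it is not, the remaining multigraph splits $B$ into two components with vertex sets $V_1,V_2$, giving separate transitive factorizations to which the hypothesis applies; the removed edge $t$ joins one cycle from each piece into a single cycle of $w|_B$, and the arithmetic $b_B=b_{V_1}+b_{V_2}-1$ together with $n_1+n_2=s_B$ delivers the bound.
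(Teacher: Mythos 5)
The paper offers no proof of this statement to compare against---it is imported verbatim from Shi \cite[Thm.~4.4]{Shi2007}---so I am judging your argument on its own terms. Your two-sided strategy is the right one, and your lower bound is correct and complete in outline: the partition of the cycles of $w$ by connected components of the transposition multigraph is indeed a null cycle partition (the weight of each component is the sum of the weights of the diagonal reflections inside it, hence lies in $p\ZZ/m\ZZ$), the per-block counts $d_B \geq [\sigma_B \neq 0]$ and $r_B \geq s_B + b_B - 2$ are justified, and the latter is exactly the transitive-factorization inequality \eqref{eq:transitive length bound} that the paper itself invokes (from \cite{BMS}) in the proof of \Cref{prop:general bound}; your inductive proof of it is also sound. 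The bookkeeping $\ell \geq n + c(w) - v(\Pi) \geq n + c(w) - \vmax(w)$ checks out.

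The one genuine soft spot is in the upper bound, at the step ``distributing weights so that the resulting cycles match $C_1, \ldots, C_j$.'' You count degrees of freedom only against the $j$ \emph{cycle weights}, but the factorization must reproduce $w|_P$ exactly, i.e., all $s_P$ individual position weights, and these are determined by the $s_P - 1$ merging parameters, the forced diagonal weight $\sigma_P$, and the $j-1$ splitting parameters through a linear system you never verify is solvable. It is solvable, but as written this is an assertion, not a proof. A clean repair is to run the construction in reverse: multiply $w|_P$ by $j - 1$ weight-$0$ transposition-like reflections to merge its cycles into a single $s_P$-cycle of weight $\sigma_P$; if $\sigma_P \neq 0$ multiply by the diagonal reflection of weight $-\sigma_P$ (legal since $\sigma_P \in p\ZZ/m\ZZ$, and the case $\sigma_P \neq 0$ cannot arise when $p = m$); and then observe by induction that a single weight-$0$ cycle on $k$ letters with \emph{arbitrary} prescribed position weights is a product of $k - 1$ transposition-like reflections, by peeling off one point at a time with a transposition whose weight parameter is chosen to leave that point with weight $0$. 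This gives the count $s_P + j - 2 + [\sigma_P \neq 0]$ per part with no linear algebra, and summing over parts yields $n + c(w) - \vmax(w)$ as you computed. With that repair the argument is a complete and essentially standard proof of Shi's formula.
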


By combining Theorem~\ref{thm: reflection length} with the formula for fixed space codimension, Shi was able to characterize the elements $w$ in $G(m, p, n)$ that satisfy $\codimfix(w) = \lR(w)$.

\begin{prop}[{\cite[ Prop.~5.3~(2)]{Shi2007}}]\label{prop: shi}
Let $w\in G(m,p,n)$. Then $\lr(w)=\codimfix(w)$ if and only if $w$ has a null cycle partition $\Pi$ in which the size $|B|$ of each part $B$ is at most $2$, and which further satisfies the following conditions: if $|B|=1$ then the cycle in $B$ has weight $0\pmod{p}$; if $|B|=2$, the cycles in $B$ have nonzero weights that sum to $0$. Moreover, in this case, the given cycle partition is maximum.
\end{prop}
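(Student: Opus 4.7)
The plan is to reduce the proposition to an inequality about null cycle partitions using the two formulas $\codimfix(w) = n - c_0(w)$ and $\lR(w) = n + c(w) - \vmax(w)$ from Theorem~\ref{thm: reflection length}. Combining these, the equality $\lR(w) = \codimfix(w)$ is equivalent to $\vmax(w) = c(w) + c_0(w)$, and the ``moreover'' clause will be automatic once we exhibit a null cycle partition attaining this value.

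The heart of the argument is a local bound on each part. For any part $B$ of a null cycle partition, let $\sigma(B)$ be the sum of the weights of its cycles and $z(B)$ the number of weight-$0$ cycles in $B$; the contribution of $B$ to $v(\Pi)$ is $1 + [\sigma(B) = 0]$. I would show
\[
1 + [\sigma(B) = 0] \leq |B| + z(B)
\]
by a case analysis on $|B|$. For $|B|=1$ the inequality is an equality in both subcases (value $2$ if the cycle has weight $0$, value $1$ if the cycle has nonzero weight but $0 \pmod p$); for $|B|=2$ the two sides are equal precisely when the two weights are nonzero and sum to $0$, and otherwise the left-hand side is strictly smaller; for $|B|\geq 3$ the right-hand side is at least $3$ while the left-hand side is at most $2$. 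Summing over $B \in \Pi$ gives the global bound $v(\Pi) \leq c(w) + c_0(w)$.

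Both directions of the proposition then drop out. If $w$ admits a null cycle partition of the stated form, the local bound is tight on every part, so the partition attains value $c(w) + c_0(w)$ and is therefore a maximum null cycle partition; this gives $\lR(w) = \codimfix(w)$ and the ``moreover'' clause simultaneously. Conversely, if $\vmax(w) = c(w) + c_0(w)$, then any maximizing $\Pi$ must saturate the local bound on every part, and the case analysis then forces each part to have size $1$ (with weight $0 \pmod p$, as required for any null cycle partition) or to be a pair of nonzero weights summing to $0$.

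The only place where some care is needed is the size-$2$ case of the local bound: one must notice that pairing a weight-$0$ cycle with any other cycle is strictly wasteful, because the weight-$0$ cycle would contribute more to $v$ as a singleton; this is exactly what excludes weight-$0$ cycles from size-$2$ parts of any maximizer, yielding the precise form asserted in the proposition.
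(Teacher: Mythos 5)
Your argument is correct and is exactly the route the paper indicates: the paper gives no proof of its own (it cites Shi's Prop.~5.3(2) and remarks that the characterization comes from combining the reflection length formula of Theorem~\ref{thm: reflection length} with $\codimfix(w)=n-c_0(w)$), and your per-part bound $1+[\sigma(B)=0]\leq |B|+z(B)$, summed to give $v(\Pi)\leq c(w)+c_0(w)$ with the stated equality cases, is a clean and complete way to carry out that combination, including the ``moreover'' clause.
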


The following rephrasing is perhaps more congenial to work with.

\begin{cor}\label{cor:shi}
An element $w \in G(m, p, n)$ satisfies $\lr(w) = \codimfix(w)$ if and
only if the multiset of cycle weights of $w$ that are not $0 \pmod{p}$ can be partitioned into pairs that sum to $0$. 
\end{cor}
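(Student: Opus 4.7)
The plan is to deduce Corollary~\ref{cor:shi} directly from Proposition~\ref{prop: shi} by a short translation between the two equivalent descriptions of the condition. The key arithmetic fact is that if $(C, C')$ is a size-$2$ part of a null cycle partition with weights $a, a' \in \ZZ/m\ZZ$ satisfying $a + a' = 0$, then $a \equiv 0 \pmod p$ if and only if $a' \equiv 0 \pmod p$, since $p \mid a$ exactly when $p \mid -a = a'$.

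For the forward direction, suppose the null cycle partition $\Pi$ promised by Proposition~\ref{prop: shi} exists. Any cycle of $w$ whose weight is not $0 \pmod p$ cannot appear as a singleton in $\Pi$ (singletons have weight $\equiv 0 \pmod p$); by the observation above, it must lie in a size-$2$ part both of whose cycles have non-$0 \pmod p$ weight. Restricting $\Pi$ to these size-$2$ parts then partitions the multiset of non-$0 \pmod p$ cycle weights into pairs summing to $0$ in $\ZZ/m\ZZ$.

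For the reverse direction, suppose such a pairing of the non-$0 \pmod p$ cycle weights of $w$ is given. Lift it to an actual pairing of cycles and define $\Pi$ by taking these pairs as the size-$2$ parts and placing every remaining cycle (each of which has weight $\equiv 0 \pmod p$) as a singleton. All parts have size at most $2$, singletons have weight $\equiv 0 \pmod p$, and size-$2$ parts consist of cycles whose weights are nonzero in $\ZZ/m\ZZ$ (since $0 \equiv 0 \pmod p$) and sum to $0$. Thus $\Pi$ satisfies the hypotheses of Proposition~\ref{prop: shi}, completing the equivalence.

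I expect no substantive obstacle: the corollary is essentially a notational repackaging of Proposition~\ref{prop: shi}. The only subtle point is the observation that within any size-$2$ part of Shi's partition, either both cycle weights are $\equiv 0 \pmod p$ or neither is, which cleanly separates the pairs so that only the ``neither'' type enters the restated condition.
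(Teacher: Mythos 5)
Your proof is correct and matches the paper's intent: the paper presents this corollary as an immediate rephrasing of Proposition~\ref{prop: shi} and gives no separate argument, and your translation (non-$0 \bmod p$ weights cannot be singletons, pairs summing to $0$ have both weights divisible by $p$ or neither, and the reverse lifting to a valid null cycle partition) is exactly the routine verification being elided. No gaps.
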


\subsection{Posets from subadditive functions}
\label{sec:general}

We end this section by proving a few general theorems about the posets $(G, \leq_f)$ for a subadditive function $f$ on a group $G$.  The first is the result (mentioned in the introduction) that the subadditive functions we consider really do determine a poset structure.  As observed in \cite[Fn.~1]{FosterGreenwood}, the proof in general is essentially the same as the proof given in \cite[Prop.~3]{BradyWatt} that $\leq_\cdf$ is a partial order on the orthogonal group.

\begin{prop}\label{prop:poset}
    Let $G$ be any group and suppose that $f: G \to \RR_{\geq 0}$ is a subadditive function (i.e., $f(xy) \leq f(x) + f(y)$ for all $x, y \in G$) such that $f(x) = 0$ if and only if $x$ is the identity in $G$.  Define a relation $\leq_f$ on $G$ by 
    \[
    x \leq_f y \qquad \Longleftrightarrow \qquad f(x) + f(x^{-1}y) = f(y).
    \]
    Then $\leq_f$ is a partial order on $G$.
\end{prop}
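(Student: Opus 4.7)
The plan is to verify the three poset axioms (reflexivity, antisymmetry, transitivity) directly from the definition of $\leq_f$, using only the hypotheses that $f$ is subadditive, nonnegative, and vanishes only at the identity.

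For reflexivity, I would observe that $f(e)=0$ (by hypothesis), so $f(x) + f(x^{-1}x) = f(x) + f(e) = f(x)$ for any $x \in G$, which is exactly $x \leq_f x$.

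For antisymmetry, suppose $x \leq_f y$ and $y \leq_f x$. Adding the two defining equalities $f(x) + f(x^{-1}y) = f(y)$ and $f(y) + f(y^{-1}x) = f(x)$ and canceling $f(x)+f(y)$ gives $f(x^{-1}y) + f(y^{-1}x) = 0$. Since $f$ is nonnegative, both summands vanish, and since $f$ vanishes only at the identity we get $x^{-1}y = e$, i.e., $x = y$.

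For transitivity, suppose $x \leq_f y$ and $y \leq_f z$, so $f(x)+f(x^{-1}y) = f(y)$ and $f(y) + f(y^{-1}z) = f(z)$. Substituting the first into the second yields
\[
f(x) + f(x^{-1}y) + f(y^{-1}z) = f(z).
\]
Subadditivity applied to the product $x^{-1}z = (x^{-1}y)(y^{-1}z)$ gives $f(x^{-1}z) \leq f(x^{-1}y) + f(y^{-1}z)$, hence $f(x) + f(x^{-1}z) \leq f(z)$. In the other direction, subadditivity applied to $z = x \cdot (x^{-1}z)$ gives $f(z) \leq f(x) + f(x^{-1}z)$. Combining these two inequalities forces equality, i.e., $x \leq_f z$. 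None of the three steps presents a real obstacle; the transitivity argument is the only one where subadditivity (rather than just the vanishing hypothesis) is used, and it appears there in the symmetric squeeze above.
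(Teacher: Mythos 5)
Your proposal is correct and follows essentially the same route as the paper: reflexivity from $f(e)=0$, antisymmetry from nonnegativity forcing $f(x^{-1}y)=0$, and transitivity by squeezing $f(x)+f(x^{-1}z)$ between $f(z)$ from both sides via subadditivity. The only cosmetic difference is that you add the two antisymmetry equalities rather than chaining them into a single string of inequalities; the content is identical.
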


\begin{proof}
    We have three properties to check.  
    
    Since $f(\id) = 0$, for any $x \in G$ we have $f(x) = f(x) + f(\id) = f(x) + f(x^{-1} x)$, and so $x \leq_f x$.

    Suppose $x \leq_f y$ and $y \leq_f x$.  Since $f$ takes only nonnegative values, we have
    \[
    f(x) \leq f(x) + f(x^{-1}y) = f(y) \leq f(y) + f(y^{-1}x) = f(x).
    \]
    This forces $f(x^{-1}y) = 0$; by the hypothesis on $f$, we have $x^{-1}y = \id$, so $x = y$.

    Finally, suppose $x \leq_f y$ and $y \leq_f z$.  By subadditivity of $f$ and the definition of $\leq_f$, we have 
    \begin{align*}
        f(z) & \leq f(x) + f(x^{-1}z) \\
            & \leq f(x) + f(x^{-1}y) + f(y^{-1}z) \\
            & = f(y)+ f(y^{-1}z) \\
            & = f(z).
    \end{align*}
    This forces $f(z) = f(x) + f(x^{-1}z)$, so $x \leq_f z$.
\end{proof}

For a given reflection group, the functions $\lR(-)$ and $\codimfix(-)$ are both constant on conjugacy classes.  For functions of this form, the poset $(G,\leq_f)$ carries additional symmetries.

\begin{prop}[{essentially \cite[Prop.~2.5]{HLR}}] \label{prop:conjugacy}
    Let $G$ be any group and $f: G \to \RR_{\geq 0}$ a subadditive function such that $f(x) = 0$ if and only if $x$ is the identity in $G$.  Suppose furthermore that $f$ is constant on conjugacy classes, i.e., $f(h^{-1}gh) = f(g)$ for all $g, h \in G$.  Then for any $x \leq_f z$ in $G$, the map $y \mapsto xy^{-1}z$ is a poset antiautomorphism of $[x, z]_f$ (so in particular $x y^{-1}z \in [x, z]_f$). 
\end{prop}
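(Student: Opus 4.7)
The plan is to establish the claim in three parts: (a) the map $\phi(y) := xy^{-1}z$ sends $[x,z]_f$ to itself, (b) it reverses the order, and (c) it is a bijection. The key leverage point in all three parts is that conjugation invariance lets us rewrite $f$-values of elements that look like $h^{-1}gh$ as $f(g)$, which is exactly what lets factorizations like $z = xy^{-1}z \cdot z^{-1}yx^{-1}z$ interact with the definition of $\leq_f$.

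The central computational lemma I would prove first is: if $x \leq_f y \leq_f z$, then
\[
f(xy^{-1}z) \;=\; f(x) + f(y^{-1}z).
\]
Subadditivity gives $f(xy^{-1}z) \leq f(x) + f(y^{-1}z)$. For the reverse, use the factorization $z = (xy^{-1}z)(z^{-1}yx^{-1}z)$, apply subadditivity once more, and rewrite $f(z^{-1}yx^{-1}z) = f(yx^{-1}) = f(x^{-1}y)$ via conjugation invariance. Combined with the identity $f(z) = f(x) + f(x^{-1}y) + f(y^{-1}z)$ (from $x \leq_f y \leq_f z$), this yields the matching lower bound. From this equality the two defining conditions $x \leq_f xy^{-1}z$ and $xy^{-1}z \leq_f z$ follow after a second round of conjugation-invariance rewrites, noting that $(xy^{-1}z)^{-1}z = z^{-1}yx^{-1}z$ is conjugate to $x^{-1}y$.

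For order-reversal, suppose $y_1 \leq_f y_2$ with both in $[x,z]_f$. Applying the lemma separately to $y_1$ and to $y_2$ computes $f(\phi(y_1))$ and $f(\phi(y_2))$, while $(\phi(y_2))^{-1}\phi(y_1) = z^{-1} y_2 y_1^{-1} z$ is conjugate to $y_1^{-1}y_2$, so $f\bigl((\phi(y_2))^{-1}\phi(y_1)\bigr) = f(y_1^{-1}y_2)$. The required identity $f(\phi(y_2)) + f((\phi(y_2))^{-1}\phi(y_1)) = f(\phi(y_1))$ then reduces to $f(y_1^{-1}y_2) + f(y_2^{-1}z) = f(y_1^{-1}z)$, which is exactly the $y_1 \leq_f y_2 \leq_f z$ hypothesis.

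Finally, for bijectivity: the map $\psi(y) := zy^{-1}x$ is a two-sided inverse of $\phi$ on $G$ (direct check from group cancellation), and by an argument entirely symmetric to the lemma above it also maps $[x,z]_f$ into itself. Hence $\phi$ restricts to a bijection on $[x,z]_f$, completing the proof that $\phi$ is a poset antiautomorphism. The ``hard'' step is really just the first computational lemma; once the conjugation-invariance trick is in place, every other identity is a formal rewrite.
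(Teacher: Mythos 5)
Your proof is correct, and it is essentially the argument the paper intends: the paper itself writes out no proof here, deferring to \cite[Prop.~2.5]{HLR} with the remark that the argument for $\ell_T$ uses nothing beyond subadditivity and conjugation-invariance, which is exactly what your computational lemma $f(xy^{-1}z)=f(x)+f(y^{-1}z)$ (via the factorization $z=(xy^{-1}z)(z^{-1}yx^{-1}z)$ and the identity $f(ab)=f(ba)$) makes explicit. All the steps check out, including the order-reversal reduction to $f(y_1^{-1}y_2)+f(y_2^{-1}z)=f(y_1^{-1}z)$ and the explicit inverse $\psi(y)=zy^{-1}x$.
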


The proof is identical to the proof of \cite[Prop.~2.5]{HLR}, which considered the case $f = \ell_T$ for some generating set $T$ of $G$ but which used no hypotheses beyond the ones stated here.

When $f = \ell_T$ is the length function for a generated group $G$, the poset $(G, \leq_T)$ is automatically graded by $\ell_T$, i.e., there is a minimum element and each cover relation is between a pair of elements of $T$-lengths $k$ and $k + 1$ for some $k$.  In particular, for a complex reflection group $W$, when $\lR(w) = \codimfix(w)$ for all $w \in W$, we have that the atoms of the associated poset are precisely the reflections.  It was observed in \cite[Prop.~2.4]{FosterGreenwood} that the converse holds: whenever a complex reflection group $W$ satisfies $\lR(w) \neq \codimfix(w)$ for some $w \in W$, there exists a non-reflection atom in the cdf-poset.  We generalize this to give a following characterization of length functions for generated groups among all subadditive functions.

\begin{prop}\label{prop:graded}
    Let $G$ be a group and $f: G \to \NN$ a subadditive function such that $f(x) = 0$ if and only if $x$ is the identity in $G$.  The poset $(G, \leq_f)$ is $f$-graded, i.e.,
    \[
    x \lessdot y \quad \Longrightarrow \quad f(y) = f(x) + 1,
    \]
    if and only if $f = \ell_T$ is the length function for $G$ 
    with generating set $T := \{ t \in G : f(t) = 1\}$.
\end{prop}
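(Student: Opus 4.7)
The plan is to prove the two implications separately, in both cases working directly from subadditivity and the defining identity $f(x) + f(x^{-1}y) = f(y)$ for $x \leq_f y$.

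For the forward direction, I assume $f = \ell_T$ with $T = \{t : f(t) = 1\}$, consider a cover $x \lessdot y$ in $(G, \leq_f)$, and set $k := f(x^{-1}y) = f(y) - f(x) \geq 1$. To rule out $k \geq 2$, I would write $x^{-1}y = t_1 \cdots t_k$ as a minimum-length $T$-word and insert the intermediate element $z := xt_1$. Subadditivity applied to both $x \cdot x^{-1}z$ and $z \cdot z^{-1}y$ yields the chain
\[
f(y) \;\leq\; f(z) + f(z^{-1}y) \;\leq\; (f(x) + 1) + (k-1) \;=\; f(y),
\]
which forces $f(z) = f(x) + 1$ and $f(z^{-1}y) = k - 1$, and these two equalities are exactly the definitions of $x \leq_f z$ and $z \leq_f y$. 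Since $f(z)$ lies strictly between $f(x)$ and $f(y)$ when $k \geq 2$, this produces a proper intermediate element, contradicting $x \lessdot y$; so $k = 1$.

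For the converse, I assume $(G, \leq_f)$ is $f$-graded and set $T := \{t \in G : f(t) = 1\}$. Subadditivity gives $f(x) \leq \ell_T(x)$ whenever $x$ is expressible as a $T$-product, so it remains to establish the reverse inequality (and, in passing, that $T$ generates $G$), which I would do by induction on $f(x)$, with trivial base cases $f(x) \in \{0, 1\}$. For the inductive step I need to produce $y \lessdot x$ with $f(y) = f(x) - 1$: granted this, $t := y^{-1}x$ satisfies $f(t) = 1$, hence $t \in T$, and $\ell_T(x) \leq \ell_T(y) + 1 \leq f(x)$ by the inductive hypothesis. To locate such a $y$, I consider the nonempty set $S := \{y : y <_f x\}$ (which contains $\id$ whenever $x \neq \id$); since $f$ is $\NN$-valued and bounded above by $f(x) - 1$ on $S$, it attains a maximum at some $y^*$. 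No $z$ can satisfy $y^* <_f z <_f x$ (such a $z$ would lie in $S$ and violate maximality), so $y^* \lessdot x$, and the graded hypothesis forces $f(y^*) = f(x) - 1$.

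The one step that requires care is the locate-a-cover argument in the backward direction: the graded hypothesis by itself only constrains covers that happen to exist, and does not \emph{a priori} guarantee that every non-identity element sits atop a cover relation. This is precisely where the assumption that $f$ is $\NN$-valued (rather than merely $\RR_{\geq 0}$-valued) is used in an essential way, ensuring the set of values attained on $S$ is finite so that a maximum exists.
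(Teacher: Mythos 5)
Your proof is correct and follows essentially the same route as the paper's: the forward direction splits a reduced $T$-word for $x^{-1}y$ to manufacture an element strictly between $x$ and $y$, and the backward direction inducts on $f$, locating a cover below each non-identity element by maximizing $f$ over $\{y : y <_f x\}$ (which is exactly where the $\NN$-valued hypothesis enters, as you note). The only cosmetic difference is that you peel off the first letter of the word rather than the last.
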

\begin{proof}
Suppose that $f = \ell_T$ for some generating set $T$; we must show that $(G, \leq_f)$ is $f$-graded.  We immediately have that $f : G \to \NN$, $f(\hat{0})=0$, and $f(t)=1$ for $t \in T$.  Choose $x <_f y$, so that (by definition) $\ell_T(x) + \ell_T(x^{-1}y) = \ell_T(y)$.  If $\ell_T(x^{-1}y) > 1$ then we can write $x^{-1}y = z \cdot t$ for some $t \in T$ and some $z \in G$ with $\ell_T(z) = \ell_T(x^{-1}y) - 1 > 0$.  In this case, $x <_f xz <_f xzt = y$, so $y$ does not cover $x$ in the poset $(G, \leq_f)$.  Taking the contrapositive, if $x \lessdot y$ then $f(y)=f(x)+1$, as claimed.

Conversely, suppose that $(G, \leq_f)$ is $f$-graded, and let $T = \{ g \in G : f(g) = 1\}$.  We proceed by induction on $f(g)$.  By assumption $f(\id_G) = 0 = \ell_T(g)$.  Choose any non-identity element $g \in G$.  Since $f$ is $\NN$-valued, $\{ f(x) : x <_f g\}$ is a set of integers less than $f(g)$, so it has some maximum element $k$; choose $x <_f g$ with $f(x) = k$.  By the maximality of $k$, we must in fact have $x \lessdot g$, since any $y$ with $x <_f y <_f g$ would satisfy $f(x) < f(y) < f(g)$.  Thus $k = f(g) - 1$.  By the definition of $\leq_f$, we have
\[
f(g) = f(x) + f(x^{-1}g),
\]
so $f(x^{-1}g) = 1$, and $x^{-1}g \in T$ by definition.  By the inductive hypothesis, $x = t_1 \cdots t_k$ with $t_i \in T$ for $i = 1, \ldots, k$, and therefore $g = x \cdot x^{-1}g = t_1 \cdots t_k \cdot x^{-1}g$ is a product of $f(g) = k + 1$ elements of $T$.  This implies $f(g) \geq \ell_T(g)$.  On the other hand, by subadditivity $f(g) \leq f(t_1) + \ldots + f(t_k) + f(x^{-1}g) = k + 1$, so in fact $f(g) = \ell_T(g)$, as claimed.
\end{proof}

\begin{remark}\label{rem:metric}
    Subadditive functions of the kind in Propositions~\ref{prop:poset}, \ref{prop:conjugacy}, and~\ref{prop:graded} are closely related to group metrics, as in \cite{ChatterjeeDiaconis}.  Indeed, if $d(-, -)$ is a left-invariant metric on a group $G$ (so that $d(xy, xz) = d(y, z)$ for all $x, y, z \in G$), then the function $f(x) := d(\id, x)$ is a subadditive function on $G$ such that $f(x) = 0 \Leftrightarrow x = \id$; and conversely if $f$ is a subadditive function such that $f(x) = 0 \Leftrightarrow x = \id$ and with the additional symmetry $f(x) = f(x^{-1})$ for all $x \in G$, then $d(x, y) := f(x^{-1}y)$ is a left-invariant metric on $G$.
\end{remark}

We now move to the proof of the main theorem.

\section{Necessity}\label{sec:necessary}

In this section, we prove that the given conditions are necessary, i.e., that if the cycle weights of $w$ cannot be partitioned into pairs that sum to $0$ and singletons that are $0 \pmod{p}$, or if they can be so partitioned but there exists a subset of the cycle weights of total weight $0 \pmod{p}$ that cannot be similarly partitioned, then $[\id, w]_{\lR} \neq [\id, w]_{\cdf}$.  It will be more convenient at first to phrase the first possibility in terms of reflection length, allowing us to state a uniform result for all complex reflection groups.

\begin{prop}\label{prop: first condition}
    Let $W$ be any complex reflection group and let $w$ be an element of $W$.
    If $\lr(w) > \codimfix(w)$, then there exists a reflection that belongs to $[\id, w]_{\lR}$ but not to $[\id, w]_{\cdf}$.
\end{prop}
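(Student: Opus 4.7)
The plan is to construct the desired reflection explicitly from a minimum reflection factorization of $w$. Write $k := \lR(w)$ and $r := \codimfix(w)$, so $k > r$ by hypothesis. Fix a minimum reflection factorization $w = t_1 \cdots t_k$ and, for $0 \le i \le k$, set $w_i := t_1 \cdots t_i$. Each prefix $w_i$ lies in $[\id, w]_{\lR}$ with $\lR(w_i) = i$, and the values $c_i := \codimfix(w_i)$ form a walk satisfying $c_0 = 0$, $c_k = r$, and $|c_i - c_{i-1}| \le 1$ (because each $t_i$ is a reflection). Because the total change $r$ is strictly less than the number $k$ of steps, the walk cannot strictly increase at every step; I will let $j$ denote the \emph{largest} index with $c_j \le c_{j-1}$.

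For every $i > j$ the strict increase $c_i = c_{i-1} + 1$ is exactly the statement $w_{i-1} \le_{\cdf} w_i$, and chaining these cover relations yields $w_j \le_{\cdf} w$. Combined with the definition of $\le_{\cdf}$, this gives the key identity $\codimfix(t_{j+1} \cdots t_k) = r - c_j$. The candidate reflection is then the conjugate $\hat t_j := (t_1 \cdots t_{j-1})\, t_j\, (t_1 \cdots t_{j-1})^{-1}$, still a reflection because reflections are closed under conjugation. Setting $u := \hat t_j^{-1} w = w_{j-1}(t_{j+1} \cdots t_k)$, the rearranged factorization $w = \hat t_j \cdot u$ has total reflection length $k$, so $\hat t_j \le_{\lR} w$. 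It remains to show $\codimfix(u) \ne r - 1$, for then $\hat t_j \not\le_{\cdf} w$ and we are done.

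Applying subadditivity of $\codimfix$ to the decomposition $u = w_{j-1} \cdot (t_{j+1} \cdots t_k)$ and inserting the identity above produces the upper bound $\codimfix(u) \le c_{j-1} + (r - c_j)$, which is equal to $r$ in the flat case $c_j = c_{j-1}$ and to $r + 1$ in the down case $c_j = c_{j-1} - 1$. The trivial lower bound $\codimfix(u) \ge r - 1$ comes from subadditivity applied to $w = \hat t_j u$. Hence $\codimfix(u) \in \{r-1, r\}$ or $\{r-1, r, r+1\}$, and we are reduced to excluding the single value $r - 1$.

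The main obstacle will be this final exclusion, which is equivalent to showing $\hat t_j \not\le_{\cdf} w$. I plan to accomplish it by a fixed-space analysis exploiting the maximality of $j$: the hypothetical equality $\codimfix(u) = r - 1$ would force $\fix(w) \subseteq \fix(\hat t_j) = w_{j-1}(\fix(t_j))$, and combining this containment with the tight $\cdf$-chain structure $w_j \le_{\cdf} w_{j+1} \le_{\cdf} \cdots \le_{\cdf} w_k$ past index $j$ should produce an additional waste step strictly beyond $j$, contradicting the choice of $j$ as the largest such index. The remaining bookkeeping is straightforward subadditivity; the delicate point is tracking how the containment of fixed spaces propagates through the suffix $t_{j+1} \cdots t_k$.
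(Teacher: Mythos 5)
Your setup is clean and correct as far as it goes: the walk $c_i=\codimfix(w_i)$ along a minimum reflection factorization must have a non-increasing step because $k>r$, the conjugate $\hat t_j=w_{j-1}t_jw_{j-1}^{-1}$ is a reflection with $\hat t_j^{-1}w=w_{j-1}t_{j+1}\cdots t_k$ a product of $k-1$ reflections, so $\hat t_j\le_{\lR}w$, and the whole problem correctly reduces to excluding $\codimfix(\hat t_j^{-1}w)=r-1$, i.e.\ to showing $\hat t_j\not\le_{\cdf}w$. But that exclusion is the entire content of the proposition, and your proposal does not prove it --- the last paragraph is an announced ``plan'' (``should produce an additional waste step'') rather than an argument. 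Nothing in your construction prevents the particular reflection $\hat t_j$ from lying below $w$ in the $\cdf$-order: the maximality of $j$ constrains only the prefix walk $c_0,\ldots,c_k$, whereas $\codimfix(w_{j-1}t_{j+1}\cdots t_k)$ is a genuinely different quantity, and the containment $\fix(w)\subseteq\fix(\hat t_j)$ that would follow from $\hat t_j\le_{\cdf}w$ does not visibly clash with the tightness of the chain $w_j\le_{\cdf}\cdots\le_{\cdf}w_k$. You would at minimum need to vary over all minimum factorizations and all choices of wasted step, and there is no indication that even this suffices.

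It is worth knowing that the paper does not have a uniform argument of the kind you are attempting, and this is evidence that the gap is not a routine one to fill. For $W=G(m,p,n)$ the paper uses Shi's formula: a maximum null cycle partition of $w$ must contain a part $B$ with either three or more cycles, or two cycles of nonzero total weight; a transposition-like reflection $t$ merging two cycles of $B$ then demonstrably satisfies $\codimfix(tw)=\codimfix(w)$ (because the merged cycle has nonzero weight, by maximality of the partition) while $\lR(tw)=\lR(w)-1$ (by exhibiting a null cycle partition of $tw$ of the same value). In other words, the verification that the constructed reflection is \emph{not} $\le_{\cdf}w$ is done by explicit combinatorics, which is exactly the step you cannot carry out abstractly. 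For the $34$ exceptional groups the paper resorts to a computer check, and reducible groups are handled by additivity over direct factors. If you want to salvage your approach, you must either supply a genuine proof of the exclusion $\codimfix(\hat t_j^{-1}w)\ne r-1$ (which would be a uniform proof improving on the paper) or fall back on a case analysis as the paper does.
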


\begin{proof}
We first establish the result for irreducible groups by a case-based approach; then at the end we show that it extends to reducible groups.

Consider $w \in G(m, p, n)$ with $\lR(w) > \codimfix(w)$.  By \Cref{prop: shi}, we equivalently have that in any maximum null cycle partition of $w$, there is either a part with at least three cycles, or there is a part with at least two cycles and nonzero total weight.  Fix such a maximum null cycle partition $\Pi$, let $B$ be the part promised by the last sentence, and let $C_1 = (a \cdots)$ and $C_2 = (b \cdots)$ be two of the cycles in $B$, of respective nonzero weights $\alpha$ and $\beta$.  Choose a reflection $t := [(a \ b); \mathbf{0}] \in G(m, p, n)$ that transposes an element from one of these cycles with an element from the other.  Since $t$ is a reflection, $\lr(t) = \codimfix(t) = 1$.  Since the entries transposed by $t$ are in different cycles of $w$, multiplying $w$ by $t = t^{-1}$ merges these two cycles into a single cycle, necessarily of weight $\alpha + \beta$.  Let us consider $\codimfix(tw)$ and $\lr(tw)$.

If $|B| = 2$, then by assumption $\alpha + \beta \neq 0$.  If instead $|B| > 2$, since $B$ is a part of a maximum null cycle partition, no subset of $B$ can have weights that sum to $0$ (or else we could split this subset into its own part, increasing the value of the partition), so also in this case $\alpha + \beta \neq 0$.  Consequently in both cases $tw$ has the same number of weight-$0$ cycles as $w$.  It follows that $\codimfix(tw) = \codimfix(w)$.  Thus $t \not \in [\id, w]_{\cdf}$.

Let $\Pi'$ be the partition of the cycles of $tw$ that we get from $\Pi$ by deleting the two cycles $C_1$ and $C_2$ from $B$ and replacing them with the merged cycle $t C_1 C_2$, otherwise leaving the partition the same.  Clearly $v(\Pi') = v(\Pi)$.  Thus $\lR(tw) \leq n + c(tw) - v(\Pi') = n + c(w) - 1 - v(\Pi) = \lR(w) - 1$.  On the other hand, by subadditivity, $1 + \lR(tw) = \lR(t) + \lR(tw) \geq \lR(w)$, so in fact we have equality, and $t \in [\id, w]_{\lR}$.  This completes the proof in the case of the infinite family $G(m, p, n)$.

We next consider the exceptional groups.  Here we proceed by a brute-force computer calculation.  For each exceptional group $W$, we performed for each conjugacy class representative $w$ the following computation: if $\lR(w) > \codimfix(w)$, we checked for each reflection $t$ whether $t \leq_{\lR} w$ and $t \not\leq_{\cdf} w$.  In all cases, the result of the calculation was to verify the existence of such a reflection.  The entire computation took a few minutes runtime on CoCalc, using SageMath and its interface with GAP and the CHEVIE package \cite{ GAP, chevie,sagemath}.

Finally, we extend the result to all (not necessarily irreducible) complex reflection groups.  Let $W = W_1 \times \cdots \times W_k$ be the decomposition of $W$ into irreducibles (an internal direct product) and $w = w_1 \cdots w_k$ the corresponding decomposition of $w$.  Reflection length and fixed space codimension are both additive over direct products, so if $\lr(w) > \codimfix(w)$, then some $w_i$ must satisfy $\lR(w_i) > \codimfix(w_i)$.   Since $W_i$ is irreducible, it falls into one of the cases above, and so there exsits a reflection $t$ in $W_i$ that satisfies $t \in [\id, w_i]_{\lR}$ and $t \not\in [\id, w_i]_{\cdf}$.  The set of reflections of $W$ is precisely the union of the sets of reflections of the $W_i$, so $t$ is a reflection in $W$.  Using again that reflection length and fixed space codimension are additive over direct products, it follows immediately that $t \leq_{\lR} w$ but $t \not\leq_{\cdf} w$.  This completes the proof.
\end{proof}

The following observation has appeared many times in the literature (e.g., as \cite[Lem.~2.4]{FosterGreenwood}); we include its proof for completeness.

\begin{prop}\label{prop:inclusion}
    Let $W$ be any reflection group and $w\in W$. If $\lr(w)=\codimfix(w)$, then $[\id, w]_{\lr}\subseteq [\id, w]_{\cdf}$.  Moreover, in this case $\lR(u) = \codimfix(u)$ for all $u \in [\id, w]_{\lR}$.
\end{prop}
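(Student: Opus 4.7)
The plan is to exploit the two ingredients already in hand: the pointwise inequality $\codimfix(g) \le \lR(g)$ from \eqref{eq:codim < lR}, and the subadditivity of $\codimfix$ (cited from \cite[Prop.~2.9]{HLR} in the introduction), together with the hypothesis $\codimfix(w) = \lR(w)$. The whole proof should come down to a single squeeze of inequalities, so I do not expect any real obstacle.

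First I would unpack the hypothesis $u \in [\id, w]_{\lR}$, which by the definition of $\leq_{\lR}$ means exactly $\lR(u) + \lR(u^{-1}w) = \lR(w)$. Then I would write the chain
\[
\codimfix(w) \;\leq\; \codimfix(u) + \codimfix(u^{-1}w) \;\leq\; \lR(u) + \lR(u^{-1}w) \;=\; \lR(w) \;=\; \codimfix(w),
\]
where the first inequality is subadditivity of $\codimfix$, the second is two applications of \eqref{eq:codim < lR}, and the last equality is the hypothesis. Since the leftmost and rightmost quantities agree, every intermediate inequality is an equality.

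From equality in the first step I read off $\codimfix(u) + \codimfix(u^{-1}w) = \codimfix(w)$, which is precisely the statement $u \leq_{\cdf} w$, yielding the containment $[\id,w]_{\lR} \subseteq [\id,w]_{\cdf}$. Equality in the second step, combined with the termwise inequalities $\codimfix(u) \leq \lR(u)$ and $\codimfix(u^{-1}w) \leq \lR(u^{-1}w)$, forces both of these to be equalities as well; in particular $\codimfix(u) = \lR(u)$, which is the ``moreover'' clause. No case analysis or combinatorial input specific to $G(m,p,n)$ is needed, so the argument works uniformly across all reflection groups.
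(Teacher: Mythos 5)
Your proposal is correct and is essentially identical to the paper's own proof: the same squeeze $\codimfix(w) \leq \codimfix(u) + \codimfix(u^{-1}w) \leq \lR(u) + \lR(u^{-1}w) = \lR(w) = \codimfix(w)$ using subadditivity of $\codimfix$, the pointwise bound \eqref{eq:codim < lR}, and the hypothesis, with equality then forced throughout. Nothing to add.
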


\begin{proof}
For any $u\in [\id, w]_{\lr}$, we have by \eqref{eq:codim < lR} and the definition of the reflection length order that $\codimfix(u)+\codimfix(u^{-1}w)\leq \lr(u)+\lr(u^{-1}w)=\lr(w)$. By the subadditivity of $\codimfix(-)$, we have $\codimfix(w)\leq \codimfix(u)+\codimfix(u^{-1}w)$. Since $\lr(w)=\codimfix(w)$, equality is forced in all inequalities, and so we have both $\lR(u) = \codimfix(u)$ and $\codimfix(u)+\codimfix(u^{-1}w)=\codimfix(w)$, i.e., $u\in [\id, w]_{\cdf}$.
\end{proof}

(In fact the preceding proof is valid for any two functions that satisfy the hypotheses of \Cref{prop:poset} and \eqref{eq:codim < lR}.)  We now shift our focus to the particular case of the combinatorial groups $G(m, p, n)$, and show the necessity of the second condition in~\Cref{thm:main}.

\begin{prop}\label{prop: second condition}
    Let $W = G(m, p, n)$ and $w \in W$.  Suppose that $\lR(w) = \codimfix(w)$ and that there exists a subset of the cycles of $w$ whose weight is $0 \pmod{p}$ but which cannot be partitioned into pairs of cycles whose weights sum to $0$ and singleton sets containing a cycle of weight $0 \pmod{p}$.  Then there is an element of $[\id, w]_{\cdf}$ that does not belong to $[\id, w]_{\lR}$.
\end{prop}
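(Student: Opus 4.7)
The plan is to exhibit a witness $u \in [\id,w]_{\cdf} \setminus [\id,w]_{\lR}$ constructed directly from the bad cycle subset. Given the subset $S$ of cycles from the hypothesis, I would first reduce to the case that every cycle in $S$ has weight $\not\equiv 0 \pmod p$, by successively removing any $\equiv 0 \pmod p$ cycle. Such a removal preserves both the weight-sum condition and the non-partitionability (otherwise we could restore a nice partition by re-inserting the removed cycle as a singleton), and it keeps $S$ nonempty, because a subset consisting only of $\equiv 0 \pmod p$ cycles trivially admits the all-singletons nice partition.

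Once $S$ is reduced, let $u \in G(m,p,n)$ be the element whose underlying permutation and weight tuple agree with those of $w$ on $\op{Supp}(S)$ (the union of the positions occupied by cycles in $S$) and are trivial elsewhere. Its weight is $\sum_{C \in S} \wt(C) \equiv 0 \pmod p$, so $u$ really lies in $G(m,p,n)$; its cycles are exactly those in $S$, together with singleton fixed points off $\op{Supp}(S)$, and the complementary element $u^{-1}w$ then acts as $w$ off $\op{Supp}(S)$ and trivially on $\op{Supp}(S)$. A short count of weight-$0$ cycles---using that no cycle of $S$ has weight zero, so every weight-zero cycle of $w$ contributes to $u^{-1}w$---gives $\codimfix(u) = |\op{Supp}(S)|$ and $\codimfix(u^{-1}w) = \codimfix(w) - |\op{Supp}(S)|$, whose sum is $\codimfix(w)$. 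Hence $u \leq_{\cdf} w$.

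To conclude that $u \not\leq_{\lR} w$, I would appeal to Proposition~\ref{prop:inclusion}: since $\lR(w) = \codimfix(w)$, every element of $[\id,w]_{\lR}$ itself satisfies $\lR = \codimfix$. But the cycle weights of $u$ that are not $\equiv 0 \pmod p$ are precisely the weights of the cycles in $S$, which by construction cannot be partitioned into pairs summing to $0$ in $\ZZ/m\ZZ$; Corollary~\ref{cor:shi} then forces $\lR(u) > \codimfix(u)$, so $u \not\in [\id,w]_{\lR}$.

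I do not anticipate a serious obstacle: the construction is essentially dictated by the hypothesis, and the two comparisons reduce to elementary weight-$0$-cycle counting on the disjoint-support factors $u$ and $u^{-1}w$. The only point requiring genuine care is the opening reduction, where one must check that stripping off $\equiv 0 \pmod p$ cycles preserves non-partitionability and leaves $S$ nonempty; once this is in hand the rest is bookkeeping.
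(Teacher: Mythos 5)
Your proposal is correct and follows essentially the same route as the paper: reduce $S$ to contain no cycles of weight $0 \pmod p$, take $u = w|_S$, verify $u \leq_{\cdf} w$ by counting weight-$0$ cycles of $u$ and $u^{-1}w$, and rule out $u \leq_{\lR} w$ via Corollary~\ref{cor:shi} together with the ``moreover'' clause of Proposition~\ref{prop:inclusion}. The only difference is that you spell out the justification of the opening reduction (re-inserting a removed cycle as a singleton, and nonemptiness of the reduced $S$) a bit more explicitly than the paper does.
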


\begin{proof}
Fix $w\in G(m,p,n)$ with cycles $C_1, \ldots, C_k$. Let $S=\{ C_{i_1}, \ldots, C_{i_s}\}$ be a subset of cycles of $w$ whose weights sum to $0\pmod{p}$, but which cannot be partitioned into pairs of cycles whose weights sum to $0$ and singleton sets containing a cycle of weight $0\pmod{p}$. Since any cycles of weight $0 \pmod{p}$ can be removed from $S$ while preserving this property, it suffices to assume that $S$ does not contain any such cycle of $w$. 

Let $u = w|_S$ be the element of $G(m, 1, n)$ that agrees with $w$ on $\supp(S)$ and acts as the identity (with weight $0$) on $\{1, \ldots, n\} \smallsetminus \supp(S)$.  Since the sum of the weights of the cycles in $S$ is $0 \pmod{p}$, in fact $u \in G(m, p, n)$.  We now show that $u \in [\id, w]_{\cdf}$. 
Since $S$ contains no cycles of weight $0 \pmod{p}$, the only cycles of weight $0$ in $u$ are the fixed points outside $\supp(S)$, and so $c_0(u) = n - |\supp(S)|$.  On the other hand, $u^{-1}w$ contains a fixed point (of weight $0$) for each element of $\supp(S)$ and also all the cycles of $w$ not in $S$; in particular, it contains all weight-$0$ cycles of $w$.  Thus $c_0(u^{-1}w) = |\supp(S)| + c_0(w)$.  It follows immediately that 
\begin{align*}
\codimfix(u)+\codimfix(u^{-1}w)
&=(n-c_0(u)) + (n-c_0(u^{-1}w))\\
&= |\supp(S)| + n-|\supp(S)| - c_0(w)\\
&=n-c_0(w)\\
&= \codimfix(w).
\end{align*}
Then $u\in [\id, w]_{\cdf}$.

On the other hand, by~\eqref{eq:codim < lR}, \Cref{cor:shi}, and the defining property of $S$, we have $\lr(u)>\codimfix(u)$. Thus by \Cref{prop:inclusion}, 
$u\notin [\id, w]_{\lr}$.  This completes the proof.
\end{proof}

\begin{cor}
    \label{cor:necessary}
    If $w \in G(m, p, n)$ satisfies $[\id, w]_{\lr} = [\id, w]_{\cdf}$, then $\lR(w) = \codimfix(w)$
\textup{and} any subset of cycle weights that sums to $0 \pmod{p}$ is a disjoint union of some weights that are $0 \pmod{p}$ and some pairs of weights that sum to $0$.
\end{cor}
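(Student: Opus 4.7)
The plan is to obtain Corollary~\ref{cor:necessary} as an immediate contrapositive consequence of the two preceding propositions. The hypothesis $[\id, w]_{\lR} = [\id, w]_{\cdf}$ gives containments in both directions, and each direction is exactly what gets negated in Propositions~\ref{prop: first condition} and~\ref{prop: second condition}; so the plan splits naturally into two steps matching the two conclusions of the corollary.

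First, I would derive $\lR(w) = \codimfix(w)$. Equation~\eqref{eq:codim < lR} already supplies $\codimfix(w) \leq \lR(w)$, so only the reverse inequality requires argument. If instead $\lR(w) > \codimfix(w)$, then Proposition~\ref{prop: first condition}, applied to $w$ as an element of $G(m, p, n)$, produces a reflection $t$ with $t \in [\id, w]_{\lR}$ but $t \notin [\id, w]_{\cdf}$, contradicting the assumed equality of the intervals. Hence $\lR(w) = \codimfix(w)$.

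Second, with that equality in hand, I would invoke Proposition~\ref{prop: second condition} contrapositively. Suppose toward contradiction that some subset of the cycles of $w$ has total weight $0 \pmod p$ but fails to be a disjoint union of cycles whose weights are individually $0 \pmod p$ together with pairs of cycles whose weights sum to $0$ exactly. Then Proposition~\ref{prop: second condition} produces an element of $[\id, w]_{\cdf} \setminus [\id, w]_{\lR}$, again contradicting the hypothesis. This yields the second condition of the corollary.

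There is no substantive obstacle at this stage: the corollary is essentially just the packaging of Propositions~\ref{prop: first condition} and~\ref{prop: second condition} into a single statement in terms of the characterization of Corollary~\ref{cor:shi}, and all the genuine work has already been done in proving those two propositions.
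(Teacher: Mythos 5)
Your proposal is correct and matches the paper's proof exactly: the paper likewise deduces the corollary immediately from Equation~\eqref{eq:codim < lR}, Proposition~\ref{prop: first condition}, and Proposition~\ref{prop: second condition}, with your write-up merely spelling out the contrapositive reasoning the paper leaves implicit.
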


\begin{proof}
  The result follows immediately from~\eqref{eq:codim < lR}, \Cref{prop: first condition}, and~\Cref{prop: second condition}.
\end{proof}

\section{Sufficiency}\label{sec:sufficient}

It remains to show that the given conditions are sufficient to imply $[\id, w]_{\lr} = [\id, w]_{\cdf}$.  We begin with a general lemma on the structure of the $\cdf$-interval below an arbitrary element $w$, whose statement requires an additional definition.

\begin{defn}
Suppose that $u, w \in G(m, p, n)$.  Define an equivalence relation $\sim$ on the cycles of $w$, as follows: for two cycles $C_1$ and $C_2$ of $w$, we have that $C_1 \sim C_2$ if there exists a cycle of $u$ that intersects both $C_1$ and $C_2$, and we extend by transitivity.  Denote by $\Pi_u(w)$ the resulting set partition of the cycles of $w$.  
\end{defn}

\begin{ex}\label{ex:partition}
\begin{enumerate}
    \item For any $w \in G(m, p, n)$, we have $\Pi_\id(w)$ is the fully refined partition in which each cycle belongs to its own part.
    \item If $c$ is an element of $G(m, p, n)$ that has a single $n$-cycle, then $\Pi_c(w)$ is the trivial partition in which all cycles of $w$ belong to the same part.
    \item For $
    w = [(1 \ 2 \ 3)(6 \ 7); (0, 0, 1, -1, -2, 2, 0, 0)] 
    $
and $u = [(2 \ 3)(5 \ 6)(7 \ 8); (0, 0, 0, 3, 0, 3, 0, 0)] $ in $G( 6, 6, 8)$, we have 
    \[
    \Pi_u(w) = [(1 \ 2 \ 3) \mid (4) \mid (5)(6 \ 7)(8) ].  \]
    \end{enumerate}
\end{ex}

For each part $B$ of $\Pi_u(w)$, we have by construction that the underlying set of $B$ is stabilized by (the underlying permutations of) both $u$ and $w$.  Let $u|_B$ and $w|_B$ be the associated restrictions, both of which may be viewed as elements of $G(m, 1, \#\supp(B)) \subseteq G(m, 1, n)$. 

\begin{ex}
    Taking $u, w$ as in Example~\ref{ex:partition}.3, we have
    \begin{align*}
    w|_{B_1} & = [(1\ 2 \ 3);(0, 0, 1, 0, 0, 0, 0, 0)], & u|_{B_1}& =[(2 \ 3); \mathbf{0}], \\
    w|_{B_2} & =[\id;(0,0,0,-1,0,0,0,0)] & u|_{B_2} & = [\id; (0, 0, 0, 3, 0, 0, 0, 0)], \\
    w|_{B_3} & =[(6 \ 7); (0, 0,0,0,-2,2, 0, 0)], & u|_{B_3} & = [(5 \ 6)(7 \ 8); (0, 0, 0, 0, 0, 3, 0, 0)].
    \end{align*}
The elements $w|_{B_1}$ and $u|_{B_1}$ may be viewed as elements of $G(6, 1, 3) \subset G(6, 1, 8)$ via the embedding in which the first three coordinates are permuted.  Note that $w|_{B_1}$ is \emph{not} an element of $G(6, 6, 8)$, since its weight is $1$ (not $0$), even though $w$ is.
\end{ex}

\begin{prop}\label{prop:general bound}
Suppose that $u, w$ are any two elements of $G(m, p, n)$.  Then
    \[
    \codimfix(u) + \codimfix(u^{-1}w) \geq n + c(w) - 2|\Pi_u(w)| + \#\{\text{parts of $\Pi_u(w)$ of nonzero weight}\}.
    \]
Furthermore, in the case of equality, for each part $B$ of $\Pi_u(w)$ we have that $c(u|_B) + c(u^{-1}w|_B) = \#\supp(B) - c(w|_B) + 2$ and exactly one of the following holds:
\begin{itemize}
   \item $\wt(w|_B) = 0$ and all cycles of $u|_B$ and $u^{-1}w|_B$ have weight $0$, 

   \item $\wt(u|_B) = \wt(w|_B) \neq 0$, one cycle of $u|_B$ has weight $\wt(w|_B)$ while the others have weight $0$, and all cycles of $u^{-1}w|_B$ have weight $0$, or

   \item $\wt(u^{-1}w|_B) = \wt(w|_B) \neq 0$, one cycle of $u^{-1}w|_B$ has weight $\wt(w|_B)$ while the others have weight $0$, and all cycles of $u|_B$ have weight $0$.
\end{itemize}
\end{prop}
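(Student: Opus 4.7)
The plan is to rewrite the inequality in terms of weight-zero cycle counts---using $\codimfix(g) = n - c_0(g)$---and then decompose additively over the parts of $\Pi_u(w)$. Since $u$, $w$, and $u^{-1}w$ all stabilize the underlying set $\supp(B)$ of each part $B$, the weight-zero cycle count is additive over parts. Writing $s_B := \#\supp(B)$ and $c_B := c(w|_B)$, and letting $\epsilon_B = 1$ when $\wt(w|_B) \neq 0$ and $\epsilon_B = 0$ otherwise, the desired inequality reduces (after summing over $B$ and using $\sum_B s_B = n$, $\sum_B c_B = c(w)$) to establishing the local bound
\[
c_0(u|_B) + c_0((u^{-1}w)|_B) \;\leq\; s_B + 2 - c_B - \epsilon_B
\]
for each part $B$.

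The local bound combines two ingredients. First, by the very definition of $\Pi_u(w)$, any two elements of $\supp(B)$ can be joined by an alternating chain of cycles of $u$ and $w$, so the subgroup of $\Symm_{\supp(B)}$ generated by the underlying permutations of $u|_B$ and $w|_B$ acts transitively on $\supp(B)$. Applied to the factorization $u|_B \cdot (u^{-1}w)|_B \cdot (w|_B)^{-1} = \id$ in $\Symm_{\supp(B)}$, the classical Hurwitz (genus) inequality for transitive factorizations yields
\[
c(u|_B) + c((u^{-1}w)|_B) + c(w|_B) \;\leq\; s_B + 2.
\]
Second, $c_0 \leq c$ trivially, and one gains an additional unit of slack when $\wt(w|_B) \neq 0$: since $\wt(u|_B) + \wt((u^{-1}w)|_B) = \wt(w|_B)$ is nonzero, at least one cycle among those of $u|_B$ and $(u^{-1}w)|_B$ must have nonzero weight, so the count of weight-zero cycles is strictly less than the total cycle count on those two elements combined. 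Combining these observations yields the local bound, and summing over $B$ gives the main inequality.

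For the equality statement, both ingredients must be simultaneously tight on every part. Tightness of the Hurwitz step is precisely the asserted identity $c(u|_B) + c((u^{-1}w)|_B) = s_B - c_B + 2$. Tightness of the second ingredient splits by cases: if $\wt(w|_B) = 0$, then every cycle of $u|_B$ and $(u^{-1}w)|_B$ must have weight $0$ (the first bullet); if $\wt(w|_B) \neq 0$, then exactly one cycle, lying in either $u|_B$ or $(u^{-1}w)|_B$, can carry nonzero weight, and by weight additivity that single cycle must carry all of $\wt(w|_B)$ (the second and third bullets). The main obstacle is the Hurwitz step: it requires a clean verification that the definition of $\Pi_u(w)$ really does force transitivity on $\supp(B)$ and a careful invocation of the genus inequality in precisely the right form; once these are in hand, the rest of the proof, including the equality-case analysis, is essentially bookkeeping.
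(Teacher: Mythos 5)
Your proposal is correct and follows essentially the same route as the paper's proof: establish transitivity of $\langle u|_B, w|_B\rangle$ on $\supp(B)$ from the definition of $\Pi_u(w)$, apply the transitive-factorization (genus) bound to get $c(u|_B)+c(u^{-1}w|_B)\leq \#\supp(B)-c(w|_B)+2$, refine by the weight observation (an Iverson-bracket unit of slack when $\wt(w|_B)\neq 0$), and sum over parts, with the equality case read off from tightness of both ingredients. Your invocation of the bound via the three-factor identity factorization is just an equivalent repackaging of the paper's two-factor form $\sigma_1\sigma_2=\tau$ from \cite[Eq.~(4)]{BMS}.
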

\begin{proof}
Choose a part $B$ of $\Pi_u(w)$.  Given any partition of $\supp(B)$ into two parts, it may be that there is a cycle of $w|_B$ that includes elements from both parts.  If not, there must be a cycle of $u|_B$ that includes elements from both parts (since otherwise the cycles of $w$ in $B$ would not be connected under $\sim$).  Thus, the underlying permutations of $u|_B$ and $w|_B$ generate a group that acts transitively on $\supp(B)$. 
Furthermore, since $u$ and $w$ stabilize $\supp(B)$, we have $(u|_B)^{-1} \cdot (w|_B) = (u^{-1}w)|_B$, with 
$u|_B$ and $u^{-1}w|_B$ generating the same group as 
$u|_B$ and $w|_B$. 
    
If $\tau, \sigma_1, \ldots, \sigma_k$ are permutations of $[N]$ such that $\sigma_1 \cdots \sigma_k = \tau$ and the group generated by $\sigma_1, \ldots, \sigma_k$ acts transitively on $[N]$, then \cite[Eq.~(4)]{BMS} 
    \begin{equation}
    \label{eq:transitive length bound}
    \sum_{i = 1}^k c(\sigma_i) \leq (k - 1)N - c(\tau) + 2.    
    \end{equation}
Taking $k = 2$, $N = \#\supp(B)$, and $\tau, \sigma_1, \sigma_2$ to be the underlying permutations of $w|_B, u|_B, u^{-1}w|_B$, respectively, it follows immediately from \eqref{eq:transitive length bound} that
    \begin{equation}
    \label{eq:part length bound}
    c(u|_B) + c(u^{-1}w|_B) \leq \#\supp(B) - c(w|_B) + 2.
    \end{equation}
Summing \eqref{eq:part length bound} over all parts $B$ of $\Pi_u(w)$, we have that
    \begin{equation}
    \label{eq:cycle bound}
    c(u) + c(u^{-1}w) \leq n - c(w) + 2 |\Pi_u(w)|.
    \end{equation}

Let us now consider how many cycles of $u$ and $u^{-1}w$ may be weight $0$.  Obviously $c_0(u|_B) + c_0(u^{-1}w|_B) \leq c(u|_B) + c(u^{-1}w|_B)$ (since each weight-$0$ cycle is a cycle).  Furthermore, if the total weight of $w|_B$ is nonzero, then (since $u|_B \cdot u^{-1}w|_B = w|_B$) at least one of $u|_B$, $u^{-1}w|_B$ must have nonzero weight, and so in this case at least one cycle of at least one of the two factors must have nonzero weight.  Therefore we can refine the previous inequality to give
    \begin{equation}
    \label{eq:considering weights}
    c_0(u|_B) + c_0(u^{-1}w|_B) \leq c(u|_B) + c(u^{-1}w|_B) - \Big[ \wt(w|_B) \neq 0 \Big],       
    \end{equation}
where the last summand on the right is an Iverson bracket.  Summing \eqref{eq:considering weights} over all parts $B$, we have
    \[
    c_0(u) + c_0(u^{-1}w) \leq c(u) + c(u^{-1}w) - \#\{\text{parts of $\Pi_u(w)$ of nonzero weight}\}.
    \]
Combining this with \eqref{eq:cycle bound} yields
    \[
    c_0(u) + c_0(u^{-1}w) \leq n - c(w) + 2|\Pi_u(w)| - \#\{\text{parts of $\Pi_u(w)$ of nonzero weight}\}.
    \]
Subtracting both sides from $2n$ gives the desired inequality.  The equality condition forces equality in \eqref{eq:part length bound} and \eqref{eq:considering weights} for every part $B$; 
when $\wt(w) = 0$, this forces $c_0(u|_B) = c(u|_B)$ and $c_0(u^{-1}w|_B) = c(u^{-1}w|_B)$, while otherwise it forces one of $u$ and $u^{-1}w$ to have all cycles of weight $0$ and the other to have all but one cycle of weight $0$ (with the extra cycle necessarily of weight $\wt(w)$).
\end{proof}

\begin{prop}
    \label{prop:partition for interval}
    Suppose that $w \in G(m, p, n)$ is arbitrary and $u \in [\id, w]_{\cdf}$.  Then 
    \begin{enumerate}[(1)]
        \item $\codimfix(u) + \codimfix(u^{-1}w) = n + c(w) - 2|\Pi_u(w)| + \#\{\text{parts of $\Pi_u(w)$ of nonzero weight}\}$ and
        \item every part of $\Pi_u(w)$ is either a singleton or consists of exactly two cycles of nonzero weights that sum to $0$.
    \end{enumerate}    
\end{prop}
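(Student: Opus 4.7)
My plan is to deduce (1) and (2) simultaneously by showing that the inequality in Proposition~\ref{prop:general bound} must be an equality \emph{part by part}. Set $c_0 := c_0(w)$, $c_{\neq 0} := c(w) - c_0(w)$, $k := |\Pi_u(w)|$, let $k_{\neq 0}$ denote the number of nonzero-weight parts of $\Pi_u(w)$, and set $k_0 := k - k_{\neq 0}$. Since $u \in [\id, w]_{\cdf}$, we have $\codimfix(u) + \codimfix(u^{-1}w) = \codimfix(w) = n - c_0$, and combining this with Proposition~\ref{prop:general bound} gives, after a short rearrangement,
\[
2k_0 + k_{\neq 0} \;\geq\; 2c_0 + c_{\neq 0}.
\]
A direct check shows that this inequality is an equality if and only if claim (1) holds.

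Next, I would establish the reverse inequality by comparing the contributions of the two sides part by part. For each part $B$ of $\Pi_u(w)$, let $a_B$ denote the number of weight-$0$ cycles of $w$ in $B$ and $b_B$ the number of nonzero-weight cycles of $w$ in $B$, so that $B$ contributes $2a_B + b_B$ to $2c_0 + c_{\neq 0}$. If $B$ is a weight-$0$ part, it contributes $2$ to $2k_0 + k_{\neq 0}$, and since the nonzero-weight cycles in $B$ must have weights summing to $0$ in $\ZZ/m\ZZ$ we cannot have $b_B = 1$; together with $a_B + b_B \geq 1$ this forces $2a_B + b_B \geq 2$, with equality exactly when $(a_B, b_B) \in \{(1,0), (0,2)\}$. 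If $B$ is a nonzero-weight part, it contributes $1$ to $2k_0 + k_{\neq 0}$, and $b_B \geq 1$ forces $2a_B + b_B \geq 1$, with equality exactly when $(a_B, b_B) = (0,1)$.

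Summing the local inequalities gives $2c_0 + c_{\neq 0} \geq 2k_0 + k_{\neq 0}$, and combined with the opposite global inequality above, equality must hold in every local bound. Globally this is assertion (1), and reading off the equality cases yields (2): each part of $\Pi_u(w)$ is either a singleton weight-$0$ cycle, a singleton nonzero-weight cycle, or a pair of nonzero-weight cycles whose weights sum to $0$ in $\ZZ/m\ZZ$. I do not foresee a conceptual obstacle; the main point to keep track of is that the notions of \emph{weight-$0$ cycle} and \emph{weight-$0$ part} refer to the $\ZZ/m\ZZ$-valued weight (rather than its reduction mod $p$), so that a weight-$0$ part with $(a_B, b_B) = (0,2)$ really does consist of two nonzero cycles whose weights sum to $0$ in $\ZZ/m\ZZ$, exactly as claim (2) requires.
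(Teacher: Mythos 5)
Your proposal is correct and follows essentially the same route as the paper: both start from the global inequality obtained by combining $\codimfix(u)+\codimfix(u^{-1}w)=n-c_0(w)$ with Proposition~\ref{prop:general bound}, and then force equality by a counting argument over the parts of $\Pi_u(w)$ whose equality cases are exactly statement (2). The only difference is organizational --- you sum local inequalities $2a_B+b_B\geq 2$ or $\geq 1$ part by part, while the paper aggregates the parts into four types $a,b,c,d$ and derives $d=0$ --- but the underlying count and the key observation (a weight-$0$ part cannot contain exactly one nonzero-weight cycle) are identical.
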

\begin{proof}
Since $u \in [\id, w]_{\cdf}$, we have $\codimfix(u) + \codimfix(u^{-1}w) = \codimfix(w) = n - c_0(w)$.  By Proposition~\ref{prop:general bound}, we have
    \begin{multline}
    \label{eq:using the proposition}
    n - c_0(w) = \codimfix(u) + \codimfix(u^{-1}w) \geq \\ n + c(w) - 2|\Pi_u(w)| + \#\{\text{parts of $\Pi_u(w)$ of nonzero weight}\},    
    \end{multline}
and hence that
    \begin{equation}
    \label{eq:w cycle upper bound}
    c_0(w) + c(w)  \leq |\Pi_u(w)| + \#\{\text{parts of $\Pi_u(w)$ of weight $0$}\}.
    \end{equation}
Let $a$ be the number of parts of $\Pi_u(w)$ consisting of a single weight-$0$ cycle of $w$, let $b$ be the number of parts consisting of a single cycle of nonzero weight, let $c$ be the number of parts having total weight $0$ with more than one cycle, and let $d$ be the total number of remaining parts.  
Then of course $|\Pi_u(w)| = a + b + c + d$,  $c_0(w) \geq a$, and $c(w) \geq a + b + 2c + 2d$.  Plugging these inequalities into \eqref{eq:w cycle upper bound} yields
\begin{align*}
    2a + b + 2c + 2d & = a + (a + b + 2c + 2d) \\
    & \leq c_0(w) + c(w) \\
    & \leq (a + b + c + d) + a + c \\
    & = 2a + b + 2c + d.
\end{align*}
This immediately forces $d = 0$.  Moreover, when $d = 0$, the equality between the first and last terms forces equality in the middle: hence we have equality in \eqref{eq:using the proposition} (the first of the two desired statements) as well as $c_0(w) = a$ and $c(w) = a + b + 2c$.  From $c_0(w) = a$, we learn that each weight-$0$ cycle in $w$ belongs to its own part in $\Pi_u(w)$.  From $c(w) = a + b + 2c$, we learn that each non-singleton part in $\Pi_u(w)$ consists of exactly two cycles and has total weight $0$; since the weight-$0$ cycles are in singleton parts, it follows that the non-singleton part of $\Pi_u(w)$ contain two cycles, both of which have nonzero weight, and that the two weights sum to $0$.  This is precisely the second claim.
\end{proof}

As a last preliminary step towards Theorem~\ref{thm:main}, we prove the theorem in the case of certain very special elements.
\begin{lem}
\label{lem:special elements}
     Suppose that $w \in G(m, p, n)$ 
    is an element with a single cycle, or with two cycles of nonzero weights that sum to $0$.  Then $[\id, w]_{\cdf} = [\id, w]_{\lr}$. 
\end{lem}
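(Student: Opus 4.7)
The plan is to establish $\lR(w) = \codimfix(w)$ for both types of $w$, deduce $[\id, w]_{\lR} \subseteq [\id, w]_{\cdf}$ from Proposition~\ref{prop:inclusion}, and then show the reverse containment by controlling the cycle-weight structure of an arbitrary $u \in [\id, w]_{\cdf}$ and applying Corollary~\ref{cor:shi}. In either hypothesis, the multiset of cycle weights of $w$ that are not $0 \pmod p$ can be paired into zero-sum pairs: vacuously in the single-cycle case (since $w \in G(m,p,n)$ forces the unique cycle weight to be $0 \pmod p$), and by the pair $\{\alpha, -\alpha\}$ in the two-cycle case. So Corollary~\ref{cor:shi} gives $\lR(w) = \codimfix(w)$, and the easy inclusion follows.

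For the other inclusion, fix $u \in [\id, w]_{\cdf}$. Proposition~\ref{prop:conjugacy} (with $f = \codimfix$, $x = \id$, $z = w$) places $u^{-1}w$ in the same interval, and Proposition~\ref{prop:partition for interval} combined with the equality case of Proposition~\ref{prop:general bound} controls the cycle-weight structure of $u$ and $u^{-1}w$ part by part: in each part $B$ of $\Pi_u(w)$, either $\wt(w|_B) = 0$ and all cycles of $u|_B$ and $u^{-1}w|_B$ have weight $0$, or $\wt(w|_B) \neq 0$ and exactly one of the two restrictions carries a single nonzero cycle of weight $\wt(w|_B)$ while the other is entirely weight-$0$.

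I would then do the case analysis for the two special forms of $w$. When $w$ is a single cycle, $\Pi_u(w)$ has a unique singleton part and the structural result above forces every nonzero cycle weight of $u$ to equal $\wt(w) \equiv 0 \pmod p$, so the multiset of non-$0 \pmod p$ cycle weights of $u$ is empty. When $w$ has two cycles $C_1, C_2$ of nonzero weights $\alpha$ and $-\alpha$, $\Pi_u(w)$ either consists of the single part $\{C_1, C_2\}$ (which forces every cycle of $u$ to be weight-$0$) or of two singleton parts (in which case the nonzero cycles of $u$ consist of at most one of weight $\alpha$ and at most one of weight $-\alpha$, and the constraint $\wt(u) \equiv 0 \pmod p$ forces either both to appear---so they pair---or neither to appear; in the residual edge case where only one appears, one must have $\alpha \equiv 0 \pmod p$, so that solitary cycle contributes nothing to the multiset of non-$0 \pmod p$ weights). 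In every subcase the non-$0 \pmod p$ cycle weights of $u$ pair up to $0$, so Corollary~\ref{cor:shi} yields $\lR(u) = \codimfix(u)$, and the structurally symmetric statement applies to $u^{-1}w$.

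Combining $\lR(u) = \codimfix(u)$ and $\lR(u^{-1}w) = \codimfix(u^{-1}w)$ with the defining identity $\codimfix(u) + \codimfix(u^{-1}w) = \codimfix(w) = \lR(w)$ gives $\lR(u) + \lR(u^{-1}w) = \lR(w)$, i.e., $u \leq_{\lR} w$. The main obstacle I anticipate is the case-by-case bookkeeping in the two-cycle setting---especially tracking when $\alpha \bmod p$ is or is not zero, since those two regimes admit different ``unpaired'' subcases---but the structural information handed down by Proposition~\ref{prop:general bound} constrains the possibilities tightly enough that the verification is bounded and mechanical.
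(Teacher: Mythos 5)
Your proposal is correct and follows essentially the same route as the paper: both establish $\lR(w) = \codimfix(w)$ to get $[\id,w]_{\lR} \subseteq [\id,w]_{\cdf}$ via Proposition~\ref{prop:inclusion}, then use the equality case of Proposition~\ref{prop:general bound} (which you access via Proposition~\ref{prop:partition for interval}, while the paper verifies the equality directly from the definition of $\leq_{\cdf}$) to pin down the cycle-weight structure of $u$ and $u^{-1}w$ part by part, and conclude with Corollary~\ref{cor:shi}. Your case analysis, including the sub-cases on $\Pi_u(w)$ for the two-cycle situation and the observation that a solitary nonzero cycle must have weight $0 \pmod p$, matches the paper's.
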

\begin{proof}
    We consider three cases.
    
    First, suppose that $w$ consists of a single cycle of weight $0$.  By~\Cref{thm: reflection length} and the definition of $\codimfix(-)$,
    $\lR(w) = \codimfix(w) = n - 1$, and therefore by~\Cref{prop:inclusion},
    $[\id, w]_{\lr} \subseteq [\id, w]_{\cdf}$.  We consider the reverse inclusion.  Let $u \in [\id, w]_{\cdf}$.  Since $w$ has only one cycle, $\Pi_u(w)$ is the unique set partition of the one-element set containing this cycle.  By the definition of $\leq_{\cdf}$, we have \begin{align*} 
    \codimfix(u) + \codimfix(u^{-1}w) 
    & = n + 1 - 2 + 0 \\
    & = n + c(w) - 2|\Pi_u(w)| + \#\{\text{parts of $\Pi_u(w)$ of nonzero weight}\}.
    \end{align*}
    Therefore, by Proposition~\ref{prop:general bound}, all cycles of $u$ and $u^{-1}w$ have weight $0$.  It follows immediately from Proposition~\ref{prop: shi} that $\lR(u) = \codimfix(u)$ and $\lR(u^{-1}w) = \codimfix(u^{-1}w)$, and hence that $u \in [\id, w]_{\lr}$, as claimed.

    Second, suppose that $w$ consists of a single cycle of nonzero weight.  By~\Cref{thm: reflection length} and the definition of $\codimfix(-)$, $\lR(w) = \codimfix(w) = n$, and therefore by~\Cref{prop:inclusion}, $[\id, w]_{\lr} \subseteq [\id, w]_{\cdf}$.  We consider the reverse inclusion.  Let $u \in [\id, w]_{\cdf}$.  Since $w$ has only one cycle, $\Pi_u(w)$ is the unique set partition of the one-element set containing this cycle.  By the definition of $\leq_{\cdf}$, we have \begin{align*} 
    \codimfix(u) + \codimfix(u^{-1}w) 
    & = n + 1 - 2 + 1 \\
    & = n + c(w) - 2|\Pi_u(w)| + \#\{\text{parts of $\Pi_u(w)$ of nonzero weight}\}.
    \end{align*}
    Therefore, by Proposition~\ref{prop:general bound}, one of $u$ and $u^{-1}w$ has one cycle of nonzero weight (and some number of cycles of weight $0$) and the other has all cycles of weight $0$; moreover, this nonzero weight is $\wt(w)$, which is $0 \pmod{p}$.  It follows immediately from Proposition~\ref{prop: shi} that $\lR(u) = \codimfix(u)$ and $\lR(u^{-1}w) = \codimfix(u^{-1}w)$, and hence that $u \in [\id, w]_{\lr}$, as claimed.

    Finally, suppose that $w$ consists of two cycles of nonzero weights that sum to $0$.  By~\Cref{thm: reflection length} and the definition of $\codimfix(-)$, $\lR(w) = \codimfix(w) = n$, and therefore by~\Cref{prop:inclusion}, $[\id, w]_{\lr} \subseteq [\id, w]_{\cdf}$.  We consider the reverse inclusion.  Let $u \in [\id, w]_{\cdf}$. We have two sub-cases, depending on the structure of $\Pi_u(w)$.

    First, suppose that $\Pi_{u}(w)$ is the set partition with a single part that contains both cycles of $w$. By the definition of $\leq_{\cdf}$, we have \begin{align*}     \codimfix(u) + \codimfix(u^{-1}w) 
    & = n + 2 - 2 + 0 \\
    & = n + c(w) - 2|\Pi_u(w)| + \#\{\text{parts of $\Pi_u(w)$ of nonzero weight}\}.
    \end{align*}
    Therefore, by Proposition~\ref{prop:general bound}, all cycles of $u$ and $u^{-1}w$ have weight $0$. It follows immediately from Corolary~\ref{cor:shi} that $\lR(u) = \codimfix(u)$ and $\lR(u^{-1}w) = \codimfix(u^{-1}w)$, and hence that $u \in [\id, w]_{\lr}$, as claimed.
    
    Alternatively, it could be that $\Pi_u(w)$ is the set partition of two parts $B_1, B_2$, each containing exactly one cycle of $w$. By the definition of $\leq_{\cdf}$, we have \begin{align*} 
    \codimfix(u) + \codimfix(u^{-1}w) 
    & = n + 2 - 4 + 2 \\
    & = n + c(w) - 2|\Pi_u(w)| + \#\{\text{parts of $\Pi_u(w)$ of nonzero weight}\}.
    \end{align*}
    By Proposition~\ref{prop:general bound}, 
    we know that for either $i = 1, 2$, either $u|_{B_i}$ has one cycle with weight $\wt(w|_{B_i})$ and some cycles of weight $0$ (possibly none) while $u^{-1}w|_{B_i}$ has all cycles of weight $0$, or the reverse. 
    This leads to three possibilities for $u$, $u^{-1}w$: each of them may have all cycles of weight $0$, may have two cycles of nonzero weight that sum to zero and all other cycles of weight $0$, or may have one cycle of nonzero weight and all others of weight $0$ (in which case necessarily the one cycle must have weight $0 \pmod{p}$, since $u, u^{-1}w \in G(m, p, n)$).  In all three situations, we have immediately from~\Cref{cor:shi} that $\lr(u)=\codimfix(u)$ and $\lr(u^{-1}w)=\codimfix(u^{-1}w)$, and hence that $u\in[\id, w]_{\lr}$. 

    Since the preceding cases are exhaustive, the proof is complete.
\end{proof}

We are ready now to complete the second direction of the biconditional in the main theorem.  

\begin{prop}
    \label{prop:sufficient}
    If $w \in G(m, p, n)$ satisfies $\lR(w) = \codimfix(w)$ and any subset of cycle weights that sums to $0 \pmod{p}$ is a disjoint union of some weights that are $0 \pmod{p}$ and some pairs of weights that sum to $0$, then $[\id, w]_{\lr} = [\id, w]_{\cdf}$.
\end{prop}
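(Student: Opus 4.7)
The plan is to combine Proposition~\ref{prop:inclusion} with the equality case of Proposition~\ref{prop:general bound} and then invoke the second hypothesis of the theorem via Corollary~\ref{cor:shi}.

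Since $\lR(w) = \codimfix(w)$, Proposition~\ref{prop:inclusion} already yields $[\id, w]_{\lR} \subseteq [\id, w]_{\cdf}$, so only the reverse inclusion is needed. Fix $u \in [\id, w]_{\cdf}$. By Proposition~\ref{prop:partition for interval}, every part of $\Pi_u(w)$ is either a singleton or a pair of cycles of opposite nonzero weights, and the inequality of Proposition~\ref{prop:general bound} is actually an equality. The trichotomy in the equality clause of Proposition~\ref{prop:general bound} then pins down the cycle structure on each part $B$: when $\wt(w|_B) = 0$ (the singleton weight-zero parts and the two-cycle parts), all cycles of $u|_B$ and of $(u^{-1}w)|_B$ have weight $0$; when $B$ is a singleton part with $\wt(w|_B) = a_B \neq 0$, exactly one of $u|_B$, $(u^{-1}w)|_B$ has a single cycle of weight $a_B$ with all other cycles of weight $0$, while the other has all cycles of weight $0$.

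Consequently, the multiset $\mathcal{S}_u$ of nonzero cycle weights of $u$ consists of those $a_B$ for which $B$ is a singleton part of nonzero weight whose nonzero cycle is assigned to $u|_B$; and similarly for the multiset $\mathcal{S}_{u^{-1}w}$. Both $\mathcal{S}_u$ and $\mathcal{S}_{u^{-1}w}$ are sub-multisets of the cycle weights of $w$, and they sum to $\wt(u)$ and $\wt(u^{-1}w)$ respectively, each of which is $\equiv 0 \pmod p$ because $u, u^{-1}w \in G(m, p, n)$. By the second hypothesis of the theorem, each decomposes as a disjoint union of weights that are individually $0 \pmod p$ together with pairs summing to $0$; in particular, the cycle weights of $u$ (and of $u^{-1}w$) that are nonzero modulo $p$ pair up to sum to $0$. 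Corollary~\ref{cor:shi} then gives $\lR(u) = \codimfix(u)$ and $\lR(u^{-1}w) = \codimfix(u^{-1}w)$. Adding these identities and using $u \in [\id, w]_{\cdf}$ together with $\lR(w) = \codimfix(w)$ yields $\lR(u) + \lR(u^{-1}w) = \lR(w)$, i.e.\ $u \in [\id, w]_{\lR}$, as desired.

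The main obstacle is the bookkeeping needed to extract $\mathcal{S}_u$ and $\mathcal{S}_{u^{-1}w}$ from the equality analysis of Proposition~\ref{prop:general bound} and to confirm that both are genuine sub-multisets of the cycle weights of $w$ with sum $\equiv 0 \pmod p$, so that the second hypothesis applies to each separately. Once this multiset picture is in hand, the remainder is a direct application of Corollary~\ref{cor:shi}.
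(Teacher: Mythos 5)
Your proof is correct, and its endgame is genuinely different from (and shorter than) the paper's. Both arguments begin identically: Proposition~\ref{prop:inclusion} gives the inclusion $[\id,w]_{\lR}\subseteq[\id,w]_{\cdf}$, and Propositions~\ref{prop:general bound} and~\ref{prop:partition for interval} pin down the weights of the cycles of $u|_B$ and $(u^{-1}w)|_B$ on each part $B$ of $\Pi_u(w)$, so that the nonzero cycle weights of $u$ (and of $u^{-1}w$) form the weight multiset of a genuine subset of the cycles of $w$ whose total weight is $0\pmod{p}$. From that point the paper takes a longer road: it uses the hypothesis to build a null cycle partition $P_u(w)$ whose blocks are single cycles of weight $0\pmod p$ or opposite-weight pairs, proves $u|_B\leq_{\cdf}w|_B$ on each block, invokes Lemma~\ref{lem:special elements} to upgrade each block to the $\lR$-order, and then sums over blocks. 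You instead apply the hypothesis and Corollary~\ref{cor:shi} directly to $u$ and to $u^{-1}w$ to get $\lR(u)=\codimfix(u)$ and $\lR(u^{-1}w)=\codimfix(u^{-1}w)$, and then add; this closing step is exactly the implication (e)\,$\Rightarrow$\,(a) that the paper itself proves in Proposition~\ref{prop:heritability}. One small point worth spelling out (you flag it as the ``main obstacle''): each pair in the decomposition guaranteed by the hypothesis sums to $0$, so its two entries are either both $\equiv 0\pmod p$ or both $\not\equiv 0\pmod p$; hence the weights that are $\not\equiv 0\pmod p$ really do pair off among themselves, which is what Corollary~\ref{cor:shi} requires. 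Your route renders Lemma~\ref{lem:special elements} and the construction of $P_u(w)$ unnecessary for this proposition; what the paper's construction buys is an explicit factorization of $u$ and $w$ compatible with a maximum null cycle partition, which is more informative but not needed for the bare statement.
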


\begin{proof}
Let $w \in G(m, p, n)$ be as in the statement.  Since $\lR(w) = \codimfix(w)$, we have by \Cref{prop:inclusion} that $[\id, w]_{\lR} \subseteq [\id, w]_{\cdf}$, and we seek to prove the opposite inclusion.  To that end, choose $u \in [\id, w]_{\cdf}$.

Consider the partition $\Pi_u(w) = \{B_1, \ldots, B_k\}$ of the cycles of $w$ induced by $u$.  Since $u \in [\id, w]_{\cdf}$, we have on one hand by Proposition~\ref{prop:partition for interval}(2) that each $B_i$ either consists of a single cycle of $w$ or of a pair of cycles whose weights sum to $0$.  On the other hand, we have by Proposition~\ref{prop:partition for interval}(1) that $\codimfix(u) + \codimfix(u^{-1}w) = n + c(w) - 2|\Pi_u(w)| + \#\{\text{parts of $\Pi_u(w)$ of nonzero weight}\}$.  Therefore, by the equality case of Proposition~\ref{prop:general bound}, we have for each part $B_i$ of $\Pi_u(w)$ that either $u|_{B_i}$ has all cycles of weight $0$, or that $u|_{B_i}$ has a single cycle whose weight is equal to $\wt(w|_{B_i})$ and some number of other cycles of weight $0$.
Combining these two separate statements leaves us with three possibilities for each part $B_i$: either
\begin{itemize}
    \item $w|_{B_i}$ consists of two cycles of nonzero weight whose weights sum to $0$, and $u|_{B_i}$ consists of a number of cycles of weight $0$, or
    \item $w|_{B_i}$ consists of a single cycle, and $u|_{B_i}$ consists of a number of cycles of weight $0$, or
    \item $w|_{B_i}$ consists of a single cycle of nonzero weight, and $u|_{B_i}$ has one cycle of this weight and possibly some other cycles, all of which have weight $0$.
\end{itemize}
By taking the union over all $B_i$, it follows in particular that the multiset of cycle weights of $u$ is the union of a submultiset of the multiset of cycle weights of $w$ and a multiset containing some number of copies of $0$.  

Let $S$ be the set consisting of all cycles $C$ of $w$ that fall into the third category above, i.e., for which there is a corresponding cycle of $u$ supported on $\supp(C)$ having the same nonzero weight as $C$.  Thus, the multiset of nonzero cycle weights of $u$ is precisely the same as the multiset of weights of cycles in $S$.  Since $u \in G(m, p, n)$, the nonzero cycle weights of $u$ sum to $0 \pmod{p}$, and therefore $S$ is a set of cycles of $w$ with the property that the sum of the weights of its cycles is $0 \pmod{p}$.  By the hypothesis on $w$, it follows that there exists a partition (call it $Q$) of $S$ consisting of some singleton sets containing a cycle of weight $0 \pmod{p}$ and some pairs in which the weights are nonzero but sum to $0$ (and no other parts).

Define a partition $P_u(w)$ of the cycles of $w$ into parts of size $1$ and $2$, as follows: 
\begin{itemize}
    \item First, if two cycles of $w$ belong to the same part $B_i$ of $\Pi_u(w)$, then they belong to the same part in $P_u(w)$.  (The cycles in each pair in this case have nonzero weights that sum to $0$.)
    \item Second, if two cycles of $w$ both belong to $S$ (so, by definition of $S$, are not covered in the prior case) and belong to the same part in $Q$, then they belong to the same part in $P_u(w)$.  (The cycles in each pair in this case have nonzero weights that sum to $0$.)
    \item Finally, let $S'$ be the set of cycles of $w$ not covered in either of the prior cases.  Since $w \in G(m, p, n)$, the sum of all its cycle weights is $0\pmod{p}$.  The cycles of $w$ covered by the preceding cases have total weight $0$, so the cycles of $S'$ have total weight $0\pmod{p}$.  Therefore, by the hypothesis on $w$, there is a partition $Q'$ of $S'$ each of whose parts is either a singleton of weight $0\pmod{p}$ or a pair of cycles of nonzero weight that sum to $0$.  Let $P_u(w)$ include all the parts of $Q'$.
\end{itemize}
An example of this construction is given as Example~\ref{ex:P_u(w)} below. 

Since each part of $P_u(w)$ is a union of parts of $\Pi_u(w)$, the underlying permutations of $u$ and $w$ respect the underlying set partition of $[n]$, and it makes sense to speak of the restrictions $u|_B$ and $w|_B$ for $B$ a part of $P_u(w)$.  Furthermore, the partition $P_u(w)$ is a \emph{null} cycle partition of $w$: by construction, each part of size two has total weight $0$, and each part of size one has weight $0 \pmod{p}$.  Thus, each element $w|_B$ belongs to $G(m, p, n)$, not just to $G(m, 1, n)$.  Moreover, the same holds true for $u$ because, by construction, each restriction $u|_B$ either has weight $0$ or has the same weight as $w|_B$.  Thus, we may view the products
\[
w = \prod_{B \in P_u(w)} w|_B
\qquad \text{and} \qquad
u = \prod_{B \in P_u(w)} u|_B
\]
as factorizations of $w, u$ into elements of $G(m, p, n)$.

Next we claim that for each part $B$ of $P_u(w)$, the element $u|_B$ belongs to $\big[\id, w|_B\big]_{\cdf}$. 
To see this we must consider the several cases in which parts $P_u(w)$ were constructed.  If $B$ is a part of $\Pi_u(w)$ (consisting of either one or two cycles of $w$), then we combine Propositions~\ref{prop:general bound} and~\ref{prop:partition for interval} in the same way as before: \Cref{prop:partition for interval} says that we are in the equality case of \Cref{prop:general bound}, and the conditions of the equality case imply that $\codimfix(u|_B) + \codimfix(u^{-1}w|_B) = \codimfix(w|_B)$ in each case.  The other possibility is that $B$ consists of two cycles, each of which formed a singleton part in $\Pi_u(w)$.  This could happen either because the two cycles of $w$ belonged to the same part in the partition $Q$ of $S$, or because they belonged to the same part in the partition $Q'$ of $S'$.  We spell out the details for only the first of these two cases; the other is very similar.  So suppose that $B = \{C_1, C_2\}$ consists of two cycles of $w$ having nonzero weights that sum to $0$, and that $u|_B$ consists of some number of cycles of weight $0$, as well as one cycle of the same weight as $C_1$ and another of the same weight as $C_2$.  As before, by Propositions~\ref{prop:general bound} and~\ref{prop:partition for interval}, we have for $i = 1, 2$ that $c(u|_{\{C_i\}}) + c(u^{-1}w|_{\{C_i\}}) = \#\supp(C_i) + 1$ and that $c_0(u|_{\{C_i\}}) = c(u|_{\{C_i\}}) - 1$, $c_0(u^{-1}w|_{\{C_i\}}) = c(u^{-1}w|_{\{C_i\}})$.  Substituting the latter two equations into the former, subtracting from $2\#\supp(C_i)$, and adding the two resulting equations for $i = 1, 2$ gives $\codimfix(u|_B) + \codimfix(u^{-1}w|_B) = \#\supp(B) = \codimfix(w|_B)$, as needed.

Since each element $w|_B$ is either a single cycle of weight $0 \pmod{p}$ or a product of two cycles of nonzero weights that sum to $0$, we have by Lemma~\ref{lem:special elements} that $u|_B \in [\id, w|_B]_{\lr}$.  The final result is now a simple calculation: we have
\begin{multline*}
\codimfix(w) = \codimfix(u) + \codimfix(u^{-1}w) \leq \\
    \lR(u) + \lR(u^{-1}w) 
    \overset{\substack{\text{sub-} \\ \text{additivity} \\ \text{of } \lR}}{\leq} \sum_{B \in P_u(w)} \lR(u|_B) + \sum_{B \in P_u(w)} \lR(u^{-1}w|_B) = \\
    \sum_{B \in P_u(w)} \left(\lR(u|_B) +  \lR(u^{-1}w|_B) \right)
    \overset{\text{Lem.~\ref{lem:special elements}}}{=} \sum_{B \in P_u(w)} \codimfix(w|_B)
    \overset{\substack{\text{cycle}\\\text{partition}}}{=} \codimfix(w),
\end{multline*}
and therefore $\lR(u) = \codimfix(u)$ and $\lR(u^{-1}w) = \codimfix(u^{-1}w)$, so $u \in [\id, w]_{\lr}$, as claimed.
\end{proof}

\begin{ex}\label{ex:P_u(w)}
Consider the elements 
    \[
    w = [(1 \ 2)(3)(4)(5)(6)(7)(8)(9)(10); (0, 1, -1, 2, -2, 4, -4, 8, 8, 8 )] \in G( 16, 8, 10)
    \]
and $u = [(1)(2)(3)(4 \ 5)(6)(7)(8 \ 9)(10); (0, 0, 0, 1, -1, 4, -4, 0, 0, 8)] $.  Then 
    \[
    \Pi_u(w) = [(1 \ 2) \mid (3) \mid (4)(5) \mid (6) \mid (7) \mid (8)(9) \mid (10)].
    \]
Observe that this partition of the cycles is not a \emph{null} cycle partition because in the part $B_1 = \{(1 \ 2)\}$, the cycle weight is not $0\pmod{8}$. In the bulleted list in the definition of $P_u(w)$, the first category consists of the parts $B_3 = \{(4), (5)\}$ and $B_6 = \{(8), (9)\}$, the second category consists of the parts $B_1 = \{(1 \ 2)\}$ and $B_2 = \{(3)\}$, and the third category consists of the parts $B_4 = \{(6)\}$, $B_5 = \{(7)\}$, and $B_7 = \{(10)\}$.  Therefore $S = \{(6), (7), (10)\}$.  The desired partition $Q$ of $S$ is $[(6)(7) \mid (10)]$ (in this case it is unique).  To define $P_u(w)$, we have three steps.  The first step creates parts $(4)(5)$ and $(8)(9)$.  The second step creates a part $(6)(7)$.  In the third step, we have leftover cycles $S' = \{(1 \ 2), (3), (10)\}$, and the desired partition $Q'$ is $[(1 \ 2)(3) \mid (10)]$.  Thus $P_u(w) = [(1 \ 2)(3) \mid (4)(5) \mid (6)(7) \mid (8)(9) \mid (10)]$.  

In contrast to $\Pi_u(w)$, this partition \emph{is} a null cycle partition of $w$.  Furthermore the restriction of $u$ to any part has weight $0 \pmod{p}$ (including possibly weight $0$).
\end{ex}

The proof of the main theorem is essentially trivial at this point.

\maintheorem*

\begin{proof}
    The equivalence of the condition $\lR(w) = \codimfix(w)$ and the statement about cycle weights is~\Cref{cor:shi}.  With this equivalence in hand, the rest follows immediately from~\Cref{cor:necessary} and~\Cref{prop:sufficient}.
\end{proof}

\section{Further comments and questions}
\label{sec:further remarks}

We end with some remarks about the heritability of the property $[\id, w]_{\lR} = [\id, w]_{\cdf}$ and its comparison to other similar properties; about the extent to which special families of elements (regular elements and parabolic quasi-Coxeter elements) share this property in general complex reflection groups; and about which permutation statistics fall into the framework of Section~\ref{sec:general}.

\subsection{Heritability}

In \cite[Lem.~2.2]{FosterGreenwood}, a certain \emph{heritability} property was established for the relation $\lR(w) = \codimfix(w)$; namely, it was shown that if $\lR(x) = \codimfix(x)$ for every $x$ that is covered by $w$ in the $\cdf$-order, then also $\lR(w) = \codimfix(w)$.  The next result explores the extent to which similar results hold for the (stronger) condition $[\id, w]_{\lR} = [\id, w]_{\cdf}$.

\begin{prop}\label{prop:heritability}
    Fix a complex reflection group $W$.  For an element $w \in W$, the following are equivalent:
    \begin{enumerate}[(a)]
        \item $[\id, w]_{\lR} = [\id, w]_{\cdf}$
        \item $[\id, u]_{\lr}=[\id, u]_{\cdf}$ for every $u\in [\id, w]_{\lr}$
        \item $[\id, u]_{\lr}=[\id, u]_{\cdf}$ for every $u\in [\id, w]_{\cdf}$
        \item $\lR(w) = \codimfix(w)$ and $[\id, u]_{\lr}=[\id, u]_{\cdf}$ for every $u <_{\cdf} w$
        \item $\lR(u) = \codimfix(u)$ for every $u\in [\id, w]_{\cdf}$
\end{enumerate}
    The following conditions are implied by (but not equivalent to) those above:
    \begin{enumerate}[(a)]
    \addtocounter{enumi}{5}
        \item $\lR(w) = \codimfix(w)$ and $[\id, u]_{\lr}=[\id, u]_{\cdf}$ for every $u <_{\lR} w$
        \item $[\id, u]_{\lr}=[\id, u]_{\cdf}$ for every $u <_{\lR} w$
        \item $\lR(u) = \codimfix(u)$ for every $u\in [\id, w]_{\lr}$
        \item $\lR(u) = \codimfix(u)$ for every $u <_{\lR} w$
        \item $[\id, u]_{\lr}=[\id, u]_{\cdf}$ for every $u <_{\cdf} w$
    \item $\lR(u) = \codimfix(u)$ for every $u <_{\cdf} w$   
\end{enumerate}
    \end{prop}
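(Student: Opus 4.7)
The plan is to prove the equivalence of (a)--(e) in a cycle, observe that (a) implies each of (f)--(k), and then exhibit small examples in $G(m,p,n)$ showing (f)--(k) are strictly weaker.

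For (a)$\Leftrightarrow$(e): the implication (a)$\Rightarrow$(e) begins by noting that the contrapositive of Proposition~\ref{prop: first condition} forces $\lR(w)\le\codimfix(w)$ whenever no reflection separates the two intervals, so (a) together with~\eqref{eq:codim < lR} gives $\lR(w)=\codimfix(w)$; Proposition~\ref{prop:inclusion} then delivers $\lR(u)=\codimfix(u)$ for every $u\in[\id,w]_{\lR}=[\id,w]_{\cdf}$, which is (e). Conversely, (e) at $u=w$ restores $\lR(w)=\codimfix(w)$, giving $[\id,w]_{\lR}\subseteq[\id,w]_{\cdf}$ by Proposition~\ref{prop:inclusion}; for the reverse inclusion, take $u\in[\id,w]_{\cdf}$ and use Proposition~\ref{prop:conjugacy} (applicable since $\codimfix$ is constant on conjugacy classes) to place $u^{-1}w$ in $[\id,w]_{\cdf}$ as well, so (e) gives $\lR(u)+\lR(u^{-1}w)=\codimfix(u)+\codimfix(u^{-1}w)=\codimfix(w)=\lR(w)$, and hence $u\in[\id,w]_{\lR}$.

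Given (a)$\Leftrightarrow$(e), the implications (a)$\Rightarrow$(b),(c),(d) hold because for any $u\leq_{\lR}w$ or $u\leq_{\cdf}w$ one has $[\id,u]_{\cdf}\subseteq[\id,w]_{\cdf}$, so (e) for $w$ restricts to (e) for $u$, hence (a) for $u$. The converses (b)$\Rightarrow$(a) and (c)$\Rightarrow$(a) come from specializing to $u=w$, and (d)$\Rightarrow$(a) combines $\lR(w)=\codimfix(w)$ with (a) at each strict predecessor $u$ (which yields $\lR(u)=\codimfix(u)$ by the contrapositive of Proposition~\ref{prop: first condition}, and likewise for $u^{-1}w$ via Proposition~\ref{prop:conjugacy}), summing as before. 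Each of (f)--(k) is then implied by some earlier condition: (h), (i) reduce to Proposition~\ref{prop:inclusion} applied at $w$; (f), (g) are restrictions of (b) to strict $\lR$-predecessors; (j), (k) are restrictions of (c) and (e) to strict $\cdf$-predecessors.

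The main obstacle is producing counterexamples showing (f)--(k) do not imply (a). I propose two. First, $w_1=[\id;(1,3,1,3)]\in G(4,2,4)$ has cycle weights pairing as $(1,3),(1,3)$ in $\ZZ/4\ZZ$, so $\lR(w_1)=\codimfix(w_1)=4$, while the subset $\{1,1\}$ of cycle weights sums to $0\pmod 2$ without decomposing, making (a) fail by Theorem~\ref{thm:main}. For any $u<_{\lR}w_1$, Proposition~\ref{prop:inclusion} gives $\codimfix(u)=\lR(u)<4$, hence a weight-$0$ cycle, and Corollary~\ref{cor:shi} forces the nonzero cycle weights of $u$ to pair to $0$ in $\ZZ/4\ZZ$; with $n=4$ these nonzero multisets must be $\emptyset$, $\{1,3\}$, or $\{2,2\}$, each of which satisfies condition (2) of Theorem~\ref{thm:main}, so (a) holds for $u$, yielding (f), (g), (h), (i). Second, $w_2=[\id;(1,2)]\in G(6,3,2)$ satisfies $\lR(w_2)=3\neq 2=\codimfix(w_2)$ by Shi's formula, so (a), (f), (h) fail; direct computation with the eight reflections $t$ shows $tw_2$ is never a reflection, so $[\id,w_2]_{\cdf}=\{\id,w_2\}$, making (j), (k) vacuously true, and every proper predecessor of $w_2$ in $[\id,w_2]_{\lR}$ is $\id$, one of the eight reflections, or a length-$2$ element $tw_2$ with cycle weight multiset $\{3\}$, $\{1,5\}$, or $\{4,2\}$ in $\ZZ/6\ZZ$, each of which satisfies (a), so (g) and (i) hold. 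Taken together, the examples $w_1$ and $w_2$ show each of (f)--(k) can hold in the absence of (a); the verifications are elementary case analyses of the kind just sketched.
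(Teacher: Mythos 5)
Your proposal is correct and uses the same basic toolkit as the paper (Propositions~\ref{prop: first condition}, \ref{prop:inclusion}, \ref{prop:conjugacy}, and Theorem~\ref{thm:main}), but it is organized differently and its counterexamples are genuinely different. For the equivalences, you make (e) the hub: you prove (a)$\Leftrightarrow$(e) first and then derive (a)$\Rightarrow$(b),(c),(d) by observing that condition (e) is inherited by $\cdf$-subintervals; the paper instead argues (a)$\Rightarrow$(b),(c) directly by pushing an arbitrary $v \leq_{\cdf} u$ up to $w$ and back down via Propositions~\ref{prop:conjugacy} and~\ref{prop:inclusion}, and separately proves (e)$\Rightarrow$(a). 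Your route is arguably cleaner, since the heredity of (e) under taking $\cdf$-lower intervals is immediate from transitivity. The more substantive difference is in the non-equivalence examples: the paper uses $[\id;(1,1,1)]\in G(3,3,3)$ for (j),(k) and $[\id;(1,1,1,-1,-1,-1)]\in G(3,3,6)$ for (f)--(i), verifying (f) for the latter by ``a straightforward computer check,'' whereas your $w_1=[\id;(1,3,1,3)]\in G(4,2,4)$ admits a fully hand-checkable verification of (f): any $u<_{\lR}w_1$ has $\lR(u)=\codimfix(u)\leq 3$, hence a weight-$0$ cycle, hence at most one pair of odd cycle weights, which forces condition (2) of Theorem~\ref{thm:main}. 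One small imprecision: your stated enumeration of the possible nonzero cycle-weight multisets of such $u$ as ``$\emptyset$, $\{1,3\}$, or $\{2,2\}$'' is not quite right (multisets such as $\{2\}$ or $\{1,3,2\}$ also occur a priori, and $\{2,2\}$ in fact does not occur below $w_1$), but this does not affect the argument, since the only thing that matters is that at most one $\{1,3\}$ pair of odd weights can appear, and every such multiset satisfies condition (2). Your $w_2=[\id;(1,2)]\in G(6,3,2)$ plays the same role as the paper's $G(3,3,3)$ example, with the same vacuous-interval mechanism for (j),(k); the only quibble there is writing $tw_2$ where $w_2t$ is meant, which does not change the cycle-weight multisets you list.
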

\begin{proof}
Among the eleven conditions, the following implications are trivial (in each case the stronger condition explicitly includes the weaker condition):
(b)\Ra(a), 
(c)\Ra(a), 
(f)\Ra(g), (h)\Ra(i), and (d)\Ra(j).

By Equation~\eqref{eq:codim < lR} and Proposition~\ref{prop: first condition}, we have that $[\id, u]_{\lR} = [\id, u]_{\cdf}$ implies $\lR(u) = \codimfix(u)$ for any element $u$ of any complex reflection group.  From this fact, the following implications follow immediately:
(b)\Ra(f)\Ra(h), (c)\Ra(d)\Ra(e), (g)\Ra(i), and (j)\Ra(k).

Suppose $w$ satisfies (a), i.e., that $[\id, w]_{\lR} = [\id, w]_{\cdf}$, and choose any element $u$ of $[\id, w]_{\lR}$ (equivalently, any element of $[\id, w]_{\cdf}$).  By Proposition~\ref{prop:inclusion}, $[\id, u]_{\lR} \subseteq [\id, u]_{\cdf}$.  Choose $v \leq_{\cdf} u$.  By Proposition~\ref{prop:conjugacy}, $v^{-1}u \leq_{\cdf} u$, with $\codimfix(v^{-1}u) = \codimfix(u) - \codimfix(v) = \lR(u) - \lR(v)$.  By the transitivity of $\leq_{\cdf}$, we have $v^{-1}u \leq_{\cdf} w$ and $v \leq_{\cdf} w$, and so (by hypothesis) $v^{-1}u \leq_{\lR} w$ and $v \leq_{\lR} w$.  By Proposition~\ref{prop:inclusion}, this implies $\lR(v^{-1}u) = \codimfix(v^{-1}u)$ and $\lR(v) = \codimfix(v)$, and consequently $v \leq_{\lR} u$ as well.  Thus $[\id, u]_{\lR} = [\id, u]_{\cdf}$.  Since $u$ was arbitrary, we conclude that (a)\Ra(b) and (a)\Ra(c).

Suppose that condition (e) holds for $w$.  Certainly in this case $\lR(w) = \codimfix(w)$, so by Proposition~\ref{prop:inclusion} we have that $[\id, w]_{\lR} \subseteq [\id, w]_{\cdf}$.  Fix any $u \in [\id, w]_{\cdf}$.  By Proposition~\ref{prop:conjugacy}, since $\codimfix(-)$ is constant on conjugacy classes, $u^{-1}w \in [\id, w]_{\cdf}$ as well.  By assumption, this means $\codimfix(u) = \lR(u)$ and $\codimfix(u^{-1}w) = \lR(u^{-1}w)$.  Thus $\lR(u) + \lR(u^{-1}w) = \codimfix(u) + \codimfix(u^{-1}w) = \codimfix(w) = \lR(w)$, and so $u \in [\id, w]_{\lR}$.  Thus $[\id, w]_{\lR} = [\id, w]_{\cdf}$ in this case, i.e., we have shown that (e)\Ra(a).

Combining the preceding implications demonstrates that conditions (a)--(e) are equivalent, and that they imply all of (f)--(k).  To see that (f)--(k) are not equivalent to (a)--(e) in general, it is enough to observe first that the element
\[
w_1 = [\id; (1, 1, 1)] \in G(3, 3, 3)
\]
has $\lR(w_1) = 4 > 3 = \codimfix(w)$, so does not satisfy (d), and that $[\id, w_1]_{\cdf} = \{\id, w_1\}$, so $w_1$ satisfies (j) and (k); and second that the element
\[
w_2 = [\id; (1, 1, 1, -1, -1, -1)] \in G(3, 3, 6)
\]
does not satisfy (a) (by our main theorem) but that it satisfies $\lR(w_2) = 6 = \codimfix(w_2)$ (by Proposition~\ref{prop: shi}, using the partition whose three parts each contain one cycle of weight $1$ and one of weight $-1$) and satisfies (f) (a straightforward computer check) and hence also (by the above implications) (g), (h), and (i).
\end{proof}
\begin{remark}
    By Proposition~\ref{prop:inclusion}, condition (h) is equivalent to the condition $\lR(w) = \codimfix(w)$.
\end{remark}

\begin{remark}\label{rem:j = k}
    As observed in the proof of Proposition~\ref{prop:heritability}, for an element $w$ of a complex reflection group $W$ we have (j)\Ra(k).  In fact, the converse is also true, as we show now.

    Suppose that (k) holds for $w$, i.e., that $\lR(u) = \codimfix(u)$ for every $u <_{\cdf} w$. Choose any $u<_{\cdf} w$. For any $v\leq_{\cdf} u$, we have $v<_{\cdf} w$. Then $\lr(v)=\codimfix(v)$ by (k). Since $v\leq_{\cdf} u$ is arbitrary, we have (e) $\lR(v) = \codimfix(v)$ for every $v\in [\id, u]_{\cdf}$, which is equivalent to (a) $[\id, u]_{\lR} = [\id, u]_{\cdf}$. Since $u<_{\cdf} w$ is arbitrary, we have (j) $[\id, u]_{\lr}=[\id, u]_{\cdf}$ for every $u <_{\cdf} w$, as claimed.
\end{remark}

\begin{remark}
There are no additional relations among (f)--(k) beyond the relations (j)\,$\Leftrightarrow$\,(k), (f)\Ra(g)\Ra(i), and (f)\Ra(h)\Ra(i) established in the proof of Proposition~\ref{prop:heritability} and in Remark~\ref{rem:j = k}.  In particular:
\begin{itemize}
\item the element $[\id; (1,2,3,-1,-2,-3,3)]\in G(9,3,7)$ satisfies (h) and (i) but not (f) or (g) (the contradictory element is $u=[\id; (1,2,3,-1,-2,-3,0)]$);
\item the element $[\id; (1,1,1)]\in G(3,3,3)$ satisfies (g) and (i) but not (f) or (h) (since it has reflection length $4$ and fixed space codimension $3$);
\item the element $[\id; (1, 1, 1, 1)] \in G(4, 4, 4)$ satisfies (j) and (k) (because the only element below it in $\cdf$-order is the identity) but not (f), (g), (h), or (i) (a contradictory element is $u = [\id; (0,2,1,1)]$); and
\item the element $[\id; (1, 1, 1, -1, -1, -1)] \in G(3, 3, 6)$ satisfies (f), (g), (h), and (i) but not (j) or (k) (a contradictory element is $u = [\id; (1, 1, 1, 0, 0, 0)]$).
\end{itemize}
\end{remark}

\subsection{Specializing to nice families of elements}

As mentioned in the introduction, a major motivation of the present work was the result \cite[Cor.~6.6]{LM2021} that $[\id, c]_{\lR} = [\id, c]_{\cdf}$ when $c$ is a \emph{Coxeter element} in a well generated\footnote{That is, a complex reflection group $W$ that has a reflection generating set of size $\rank(W)$.  Every finite real reflection group is well generated;  in the infinite family, the well generated groups are the groups $G(m, 1, n)$ and $G(m, m, n)$.} complex reflection group $W$ (and in this case the intervals are the \emph{$W$-noncrossing partition lattice}).  In this section, we discuss two generalizations of this result, to larger families that naturally include the Coxeter elements.

\subsubsection{Regular elements}

One important family of elements in complex reflection groups are the \emph{regular} elements: an element $w \in W$ is regular if $w$ has an eigenvector $v$ that is not fixed by any reflection in $W$ (a so-called \emph{regular eigenvector}, whose associated eigenvalue is called a \emph{regular eigenvalue}).  (By Steinberg's theorem \cite[Thm.~4.7]{broue_book}, ``any reflection'' can equivalently be replaced by ``any non-identity element''.)
In a well generated complex reflection group $W$, the Coxeter elements may be characterized\footnote{There are several inequivalent definitions of Coxeter elements in the literature; see the cited paper of Reiner--Ripoll--Stump for details.} as the regular elements of maximum multiplicative order \cite[\S1.1]{RRS}, and it is natural to ask to what extent the equality $[\id, c]_{\lR} = [\id, c]_{\cdf}$ for Coxeter elements extends to other regular elements.  A first observation is that not all regular elements $w$ satisfy $\lR(w) = \codimfix(w)$; for example, the scalar matrix $[\id; (1, 1, 1)] \in G(3, 3, 3)$ is obviously regular but it has reflection length $4$.  In the infinite family $G(m, p, n)$, this is the only obstruction.

\begin{prop}\label{prop:regular}
    If $w$ is regular for $G(m, p, n)$ and $\lR(w) = \codimfix(w)$, then $[\id, w]_{\lR} = [\id, w]_{\cdf}$.
\end{prop}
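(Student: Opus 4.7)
The plan is to invoke \Cref{thm:main}: since the hypothesis $\lR(w) = \codimfix(w)$ already supplies the first condition of that theorem, it suffices to verify the second, namely that every subset of cycle weights of $w$ whose sum is $\equiv 0 \pmod{p}$ decomposes as a disjoint union of singletons $\equiv 0 \pmod{p}$ and pairs of weights summing to $0$.

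The key structural step is to show that all cycles of a regular $w$ share a common weight $\alpha \in \ZZ/m\ZZ$. To that end, let $v$ be a regular eigenvector of $w = [u; a]$ with eigenvalue $\lambda$ of multiplicative order $d$, and set $e := d/\gcd(d, m)$. Iterating the eigenvalue equation $w(v) = \lambda v$ around a cycle of $u$ of length $k$ and weight $\alpha$ yields $\lambda^k = \zeta^{\alpha}$, so any two cycles of the same length have the same weight in $\ZZ/m\ZZ$. The requirement that $v$ is not fixed by any transposition-like reflection translates, applied to two entries in the same cycle, to the arithmetic constraint $\lambda^i \notin \langle \zeta \rangle$ for $1 \leq i \leq k-1$; this forces $e \geq k$, and combined with $\lambda^k \in \langle \zeta \rangle$ (which gives $e \mid k$) yields $e = k$ for every cycle of length $k \geq 2$. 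Fixed points separately require $e = 1$. Thus $w$ is either a scalar diagonal element $[\id; (\alpha, \ldots, \alpha)]$ or has all cycles of the common length $e \geq 2$ and no fixed points; in either case every cycle of $w$ has the same weight $\alpha$.

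With this rigidity in hand, the verification of the second condition is nearly immediate. If $\alpha \equiv 0 \pmod{p}$, every subset of cycles trivially decomposes into singletons $\equiv 0 \pmod{p}$. Otherwise, applying the hypothesis $\lR(w) = \codimfix(w)$ to the multiset consisting of $N$ copies of $\alpha$ (where $N$ is the number of cycles of $w$) requires that this multiset partition into pairs summing to $0$; each such pair is forced to be $\{\alpha, \alpha\}$, so $2\alpha \equiv 0 \pmod{m}$ and hence $\alpha = m/2$ with $m$ and $N$ even. The conditions $p \mid m$ and $\alpha \not\equiv 0 \pmod p$ then force $v_2(p) = v_2(m)$, from which a short $2$-adic calculation shows that any subset size $s$ with $s\alpha \equiv 0 \pmod{p}$ must be even; such a subset partitions into $s/2$ pairs $\{\alpha, \alpha\}$, each summing to $2\alpha = 0$.

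The main obstacle is the structural dichotomy for regular elements: showing that in $G(m, p, n)$ every non-fixed-point cycle of a regular element has the common length $e$, and hence all cycles share a common weight. This rests on carefully translating the regularity of the eigenvector $v$ (the non-equality of ratios $v_i/v_j$ with roots of $\zeta$) into the arithmetic pinch $e \geq k$ versus $e \mid k$. Everything downstream is then a routine case analysis powered by the resulting common-weight structure.
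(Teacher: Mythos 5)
Your overall strategy---classify the cycle/weight structure of regular elements combinatorially and then feed it into Theorem~\ref{thm:main}---is the same as the paper's, and your handling of the case $p < m$ is essentially sound: there all coordinates of a regular eigenvector are nonzero (the diagonal reflections fix the coordinate hyperplanes), and your eigenvalue-propagation argument correctly forces all cycles to have a common length and weight. The $2$-adic computation in the final step also checks out.

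However, your structural dichotomy is false in $G(m,m,n)$, and this is a genuine gap. When $p = m$ there are no diagonal reflections, so a regular eigenvector $v$ is permitted to have exactly one zero coordinate (two zero coordinates would put $v$ on a reflecting hyperplane $x_i - \xi x_j = 0$). If $v_n = 0$, then $n$ must be a fixed point of the underlying permutation, but the eigenvalue equation at that coordinate is vacuous, so the weight of that fixed point is unconstrained by your arithmetic; it is pinned down only by the requirement that the total weight of $w$ be $0 \pmod m$. Concretely, the Coxeter element $\epsilon' = [(1\ 2\ \cdots\ n-1)(n); (0,\ldots,0,1,-1)]$ of $G(m,m,n)$ and its powers are regular, and a typical such power has $g$ cycles of length $(n-1)/g$ and weight $r/g$ together with a $1$-cycle of weight $-r$---so its cycles do \emph{not} all share a common weight, contradicting your claim that every regular element is either scalar diagonal or has all cycles of equal length $e \geq 2$ with no fixed points. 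Your downstream case analysis (``either $\alpha \equiv 0$ or $\alpha = m/2$'') therefore does not cover this family. The paper handles it by a separate argument: one shows that under the hypothesis $\lR(w) = \codimfix(w)$, either all weights are $0$, or there are only two cycles whose weights sum to $0$, or (when $g > 1$ and $r/g \not\equiv 0$) the pairing condition of Corollary~\ref{cor:shi} forces $r/g \equiv m/2 \pmod m$, whence every cycle weight, including the exceptional $-r$, lies in $\{0, m/2\}$ and condition (2) of Theorem~\ref{thm:main} follows. You would need to add this case (or restrict your eigenvector argument to honestly account for a single vanishing coordinate when $p = m$) for the proof to be complete.
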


As a prelude to the proof of Proposition~\ref{prop:regular}, we provide a combinatorial description of regular elements in the infinite family; this description is certainly not novel, but we were unable to find a fully explicit description in the literature.

\begin{lemma}\label{lem:regular elements}
    If $p < m$, an element $w$ is regular in $G(m, p, n)$ if and only if there is an integer $r \geq 1$ such that $w$ has $g:= \gcd(r, n)$ cycles, each of length $n/g$ and weight $r/g$.

    An element $w$ in $G(m, m, n)$ is regular if and only if there is an integer $r \geq 1$ such that $w$ has either (1) $g:= \gcd(r, n)$ cycles, each of length $n/g$ and weight $r/g$, or (2) $\gcd(r, n - 1) + 1$ cycles, of which all but one have length $(n - 1)/\gcd(r, n - 1)$ and weight $r/\gcd(r, n - 1)$, and the last is a $1$-cycle with weight $-r$.
\end{lemma}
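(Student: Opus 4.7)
The plan is to directly analyze a regular eigenvector of $w$ by exploiting the cyclic structure of the action. The starting point is a clean translation of ``$v\in\CC^n$ is fixed by no reflection in $G(m,p,n)$'' into a condition on the entries of $v$. Since a transposition-like reflection $[(i\ j);(a,-a,0,\ldots)]$ fixes $v$ exactly when $v_j=\zeta^a v_i$, avoiding all such fixed spaces is equivalent to the $m$-th powers $v_i^m$ being pairwise distinct, which in particular permits at most one entry of $v$ to be zero. When $p<m$, each diagonal reflection $[\id;(0,\ldots,kp,\ldots,0)]$ (with $0<kp<m$) fixes $v$ iff the corresponding $v_i=0$, so avoiding these forces every entry of $v$ to be nonzero. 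This is exactly what separates case (1) of the lemma (available for both $p<m$ and $p=m$) from case (2) (which requires $p=m$ and one zero entry of $v$).

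Next, given a regular eigenvector $v$ with eigenvalue $\lambda$, I would iterate the eigenvalue equation along each cycle $C=(x_1,\ldots,x_k)$ of weight $a_C$ to produce
\[
\lambda^k v_{x_1}=\zeta^{a_C}v_{x_1} \qquad \text{and} \qquad v_{x_{i+1}}^m = \lambda^{-m}\,v_{x_i}^m.
\]
Thus either all entries on $C$ vanish---in which case, by the previous paragraph, $C$ must be a $1$-cycle and we are in case (2)---or $\lambda^k=\zeta^{a_C}$ and the $k$ values $v_{x_i}^m$ form a geometric progression with common ratio $\lambda^{-m}$. Distinctness of these values together with $(\lambda^m)^k=1$ forces $k=\mathrm{ord}(\lambda^m)$, so every cycle on which $v$ does not vanish has the same length $\ell:=\mathrm{ord}(\lambda^m)$ and the same weight $a$ mod $m$, with $\zeta^a=\lambda^\ell$. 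Writing $\lambda=\zeta_{\ell m}^s$ for an integer $s\equiv a\pmod m$, the order condition on $\lambda^m$ becomes $\gcd(s,\ell)=1$; and letting $g_0$ denote the number of such cycles (so $g_0\ell\in\{n,n-1\}$ according to the case), the integer $r:=g_0 s$ satisfies $\gcd(r,g_0\ell)=g_0$ and $r/g_0\equiv a\pmod m$, exactly matching the claimed parametrization.

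For the converse, given the structural data I would set $\lambda=\zeta_{\ell m}^{r/g_0}$ and build $v$ by reversing the recipe above: on each of the $g_0$ cycles on which $v$ is to be nonzero, pick a free starting entry and propagate along the cycle using the recursion. Choosing the $g_0$ starting values so that their $m$-th powers lie in distinct cosets of the cyclic subgroup generated by $\lambda^{-m}$ (for instance, distinct positive reals) makes all nonzero $v_i^m$ distinct; in case (2) one sets $v=0$ on the extra $1$-cycle, whose weight is forced by $\wt(w)\equiv 0\pmod m$ to equal $-r\pmod m$. The main difficulty I expect is not conceptual but bookkeeping: verifying cleanly in both directions that ``$\gcd(s,\ell)=1$ for some $s\equiv a\pmod m$'' is equivalent to the existence of an integer $r\geq 1$ realizing the claimed gcd/length/weight identities, which ultimately reduces to the identity $\gcd(gs,g\ell)=g\gcd(s,\ell)$ and a short CRT computation.
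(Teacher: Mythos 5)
Your proposal is correct, but it takes a genuinely different route from the paper. You work directly from the definition of regularity: you translate ``$v$ is fixed by no reflection'' into the conditions that the powers $v_i^m$ are pairwise distinct (for the transposition-like reflections) and, when $p<m$, that every $v_i\neq 0$ (for the diagonal reflections), and then you extract the cycle structure of $w$ by iterating the eigenvalue equation $wv=\lambda v$ around each cycle. The key deductions --- that on each cycle the entries of $v$ either all vanish or all are nonzero, that a vanishing cycle must be a $1$-cycle since at most one $v_i^m$ can be zero, and that distinctness of the geometric progression $v_{x_i}^m=\lambda^{-m(i-1)}v_{x_1}^m$ together with $\lambda^{mk}=1$ forces every nonvanishing cycle to have length $\operatorname{ord}(\lambda^m)$ and weight determined by $\lambda$ --- are all sound, and the bookkeeping via $r=g_0 s$ with $\lambda=\zeta_{\ell m}^s$, $\gcd(s,\ell)=1$ does match the stated parametrization; the converse construction with starting values whose $m$th powers are distinct positive reals also works. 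The paper instead argues by reduction to known results: for $p<m$ it observes that $G(m,p,n)$ and $G(m,1,n)$ have the same reflection arrangement and then invokes Garnier's classification of the regular elements of $G(m,1,n)$ as the conjugates of powers of the Coxeter element $\epsilon=[(1\,2\,\cdots\,n);(0,\ldots,0,1)]$; for $p=m$ it handles the extra regular vectors lying on a coordinate hyperplane by identifying the stabilizer of that hyperplane with a copy of $G(m,1,n-1)$ and transporting the classification through that isomorphism. Your approach buys self-containedness and avoids any appeal to Springer's theory of regular elements or to the literature on $\epsilon$-conjugacy, at the cost of more hands-on eigenvector analysis; the paper's approach is shorter given the cited results and makes the connection to Coxeter elements ($\epsilon$ and $\epsilon'$) explicit, which is used elsewhere in that section.
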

\begin{proof}
    First consider the case $W = G(m, p, n)$ for $p < m$.  The set of reflecting hyperplanes of $W$ is the same as the set of reflecting hyperplanes of $G(m, 1, n)$ (there are fewer reflections, but the ``missing'' diagonal reflections all give redundant hyperplanes with the ones that exist in the smaller group).  Therefore, a vector $v \in \CC^n$ is regular for $W$ if and only if it is regular for $G(m, 1, n)$, and an element of $W$ is regular if and only if it is regular as an element of $G(m, 1, n)$.  Following \cite[\S4]{Garnier}, we have that $\epsilon = [(1 \ 2 \ \cdots \ n); (0, \ldots, 0, 1)]$ is a regular element in $G(m, 1, n)$ of order $mn$, with $\zeta_{mn} := \exp(2\pi i / mn)$ as a regular eigenvalue.  Therefore, for any power $r$, $\epsilon^r$ has $\zeta_{mn}^r$ as regular eigenvalue. For every divisor $k$ of $mn$, every $k$th root of unity is also an $mn$-th root of unity, hence of the form $\zeta_{mn}^r$ for some $r$.  By the assertion just before \cite[Lem.~4.1]{Garnier}, every regular element $w$ in $G(m, 1, n)$ has order $k$ dividing $mn$, hence all the eigenvalues of $w$ are $k$th roots of unity.  In particular, $w$ has as regular eigenvalue $\zeta_{mn}^r$ for some $r$.  And, as stated immediately following \cite[Def.~1.1]{Garnier}, therefore, $w$ is conjugate to $\epsilon^r$.  That is, every regular element in $G(m, 1, n)$ (hence in $W$) is conjugate to a power of $\epsilon$.  Moreover, it's easy to see that $\epsilon^r$ has of $\gcd(r, n)$ cycles, each of length $\frac{n}{\gcd(r, n)}$ and of weight $\frac{r}{\gcd(r, n)}$, and these data characterize its conjugacy class.

    In the group $W = G(m, m, n)$, all those elements of $W$ that are regular in $G(m, 1, n)$ are also regular in $W$ (since every reflecting hyperplane for $W$ is also a reflecting hyperplane for $G(m, 1, n)$).  However, we must also consider the possibility of elements of $W$ with regular eigenvectors that lie on one of the coordinate hyperplanes $x_i = 0$ (which are reflecting hyperplanes for $G(m, 1, n)$, but not for $W$).  Observe that a regular vector for $W$ can lie on at most one coordinate hyperplane (since the intersection of the planes $x_i = 0$ and $x_j = 0$ is contained in the plane $x_i - x_j = 0$, a reflecting hyperplane for $W$).  Therefore, if $w \in G(m, m, n)$ has a regular eigenvector $v$ that lies on the hyperplane $x_n = 0$, all other coordinates of $v$ (in the standard basis) must be nonzero, and so the underlying permutation of $w$ must have $n$ as a fixed point.  
    
    The subgroup of $G(m, m, n)$ consisting of elements whose underlying permutation has $n$ as a fixed point is isomorphic as a group to $G(m, 1, n - 1)$:  the map 
    \begin{equation}\label{eq:inclusion}
        [u; (a_1, \ldots, a_{n - 1})] \mapsto [u (n); (a_1, \ldots, a_{n - 1}, -(a_1 + \ldots + a_n)]
    \end{equation}
    is an isomorphism.  This map does not preserve reflections (specifically, the images in $W$ of diagonal reflections in $G(m, 1, n - 1)$ have two nontrivial diagonal entries), and indeed its codomain is not a reflection group on $\CC^n$.  However, when its action is restricted to the plane $x_n = 0$, the action of the subgroup agrees with the action of $G(m, 1, n - 1)$.  Furthermore, the regular vectors for $W$ in the hyperplane $x_n = 0$ are precisely the same as the regular vectors for this copy of $G(m, 1, n - 1)$: the reflections in $W$ but not in the subgroup, with reflecting planes $x_i - \xi x_n = 0$, have as their traces in the plane $x_n = 0$ the planes with relative equation $x_i = 0$, so they make up for the ``missing'' diagonal reflections.  Consequently, the regular elements of $G(m, m, n)$ with regular eigenvector in the hyperplane $x_n = 0$ are the images of the regular elements in $G(m, 1, n - 1)$ under the inclusion \eqref{eq:inclusion}; that is, they are the powers of $\epsilon' = [(1 \ 2 \ \cdots \ n - 1)(n); (0, \ldots, 0, 1, -1)]$ and the conjugates thereof.\footnote{The element $\epsilon'$ is a Coxeter element for $G(m, m, n)$, see \cite[\S2.2]{BessisCorran}.}  It is easy to see that $(\epsilon')^r$ consists of $\gcd(r, n - 1) + 1$ cycles, of which all but one have length $\frac{n - 1}{\gcd(r, n - 1)}$ and weight $\frac{r}{\gcd(r, n - 1)}$, and the last is a $1$-cycle with weight $-r$.  These data characterize its conjugacy class.
\end{proof}

\begin{proof}[Proof of Proposition~\ref{prop:regular}]
    First consider the case $p < m$, and let $w$ be a regular element in $G(m, p, n)$.  By Lemma~\ref{lem:regular elements}, $w$ has some number of cycles, all of the same weight.  By Corollary~\ref{cor:shi}, if $\lR(w) = \codimfix(w)$ then either the common weight of these cycles is $0 \pmod{p}$ (so there are no cycles whose weight is not divisible by $p$) or it is $\frac{m}{2}$ (so that the cycle weights can be paired to sum to $0$).  Thus, any subset of the cycle weights of $w$ that sums to $0 \pmod{p}$ either consists entirely of elements that are $0 \pmod{p}$ or consists of an even number of copies of $\frac{m}{2}$.  Then it immediately follows from Theorem~\ref{thm:main} that $[\id, w]_{\lR} = [\id, w]_{\cdf}$.

    Now consider the case $p = m$, and let $w$ be a regular element in $G(m, m, n)$.  If $w$ is in the first category in Lemma~\ref{lem:regular elements} (a power of a Coxeter element for $G(m, 1, n)$), the same argument as in the preceding paragraph works.  Otherwise, the multiset of cycle weights of $w$ contains, for some integers $r$, $g$ with $g \mid r$, $g$ copies of $r/g$ and one copy of $-r$.  We proceed in three cases.
    
    First, suppose $r/g \equiv 0 \pmod{m}$. Then all cycle weights of $w$ are $0$.  In this case it immediately follows from Theorem~\ref{thm:main} that $[\id, w]_{\lR} = [\id, w]_{\cdf}$.

    Second, suppose $g = 1$.  Then $w$ has only two cycles, whose weights sum to $0$, and it immediately follows from Theorem~\ref{thm:main} that $[\id, w]_{\lR} = [\id, w]_{\cdf}$.

    Third, suppose $g > 1$ and $r/g \not \equiv 0 \pmod{m}$.  Since $\lR(w) = \codimfix(w)$, we have by Corollary~\ref{cor:shi} that the multiset of cycle weights of $w$ that are not $0 \pmod{m}$ can be partitioned into pairs that sum to $0 \pmod{m}$.  By hypothesis, there are at least two cycles of weight $r/g$ (which is not $0 \pmod{m}$), and at most one of these can be paired with the cycle of weight $-r$; consequently, in this pairing, two cycles of weight $r/g$ must be paired together.  It follows that $r/g \equiv m/2 \pmod{m}$, and therefore that all cycle weights of $w$ are equal (in $\ZZ/m\ZZ$) to either $m/2$ or $0$.  Given a subset of such a multiset of sum $0$, the nonzero values may always be partitioned into pairs that sum to $0$; thus, by Theorem~\ref{thm:main}, we have that $[\id, w]_{\lR} = [\id, w]_{\cdf}$.

    Since the three cases above are exhaustive, the result is proved for $G(m, m, n)$.
\end{proof}

\begin{remark}\label{rem:exceptional}
    It is natural to ask whether the statement of Proposition~\ref{prop:regular} is furthermore true for all irreducible complex reflection groups.  Unfortunately, the answer is negative.  In particular, by an exhaustive check in SageMath \cite{sagemath}, using its interface with GAP and the CHEVIE package \cite{GAP, chevie}, we checked for each exceptional complex reflection group $W$ and each conjugacy class representative $w \in W$ whether $\lR(w) = \codimfix(w)$ but $[\id, w]_{\lR} \neq [\id, w]_{\cdf}$.  The result of this computation was that each of the groups $G_{31}$, $G_{32}$, and $G_{34}$ has two conjugacy classes of elements with these properties.  Of these six conjugacy classes, two of them are regular: in $G_{31}$, the (unique) class of elements with eigenvalues $\left\{\exp\left(2\pi i \cdot \frac{1}{6}\right), \exp\left(2\pi i \cdot \frac{1}{6}\right), \exp\left(2\pi i \cdot \frac{5}{6}\right), \exp\left(2\pi i \cdot \frac{5}{6}\right)\right\}$ is regular and satisfies $\lR = \codimfix = 4$, but the interval $[\id, w]_{\lR}$ contains $128$ elements while the interval $[\id, w]_{\cdf}$ contains $134$ elements; and in $G_{32}$, the (unique) class of elements with eigenvalues $\{i, i, -i, -i\}$ is regular and satisfies $\lR = \codimfix = 4$, but the interval $[\id, w]_{\lR}$ contains $104$ elements while the interval $[\id, w]_{\cdf}$ contains $108$ elements.
\end{remark}

\subsubsection{Parabolic quasi-Coxeter elements}

An element of a reflection group $W$ is \emph{quasi-Coxeter} if it has a minimum-length reflection factorization whose factors generate $W$.  An element is \emph{parabolic quasi-Coxeter} if it is a quasi-Coxeter element for a parabolic subgroup of $W$.  These elements have an assortment of interesting properties and characterizations \cite{DLM2}; among these is that if $w$ is a parabolic quasi-Coxeter element, then $\lR(w) = \codimfix(w)$ \cite[Prop.~3.3]{DLM2}.  Our final result, a considerable strengthening of \cite[Cor.~6.6]{LM2021}, shows that these properties include the equality of reflection length and fixed space codimension intervals.

\begin{thm}
    If $W$ is a well generated complex reflection group and $w$ is a parabolic quasi-Coxeter element in $W$, then $[\id, w]_{\lR} = [\id, w]_{\cdf}$.
\end{thm}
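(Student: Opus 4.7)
Since $\lR(w) = \codimfix(w)$ for every parabolic quasi-Coxeter element by \cite[Prop.~3.3]{DLM2}, Proposition~\ref{prop:inclusion} gives $[\id, w]_{\lR} \subseteq [\id, w]_{\cdf}$, and only the reverse inclusion needs proof. Parabolic subgroups, reflection length, fixed-space codimension, and the interval structure all respect direct-product decompositions, so the claim reduces to the case of irreducible well-generated $W$. By the Shephard--Todd classification, these are the real reflection groups, the infinite families $G(m,1,n)$ and $G(m,m,n)$ with $m, n \geq 2$, and a finite list of exceptional examples.

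For the real reflection groups and for $G(m,1,n)$, every element $x \in W$ satisfies $\lR(x) = \codimfix(x)$, so $\leq_{\lR}$ and $\leq_{\cdf}$ coincide as partial orders on $W$ and the conclusion is immediate. For the exceptional well-generated groups, we carry out a computer verification along the lines of Remark~\ref{rem:exceptional}: using SageMath with its interface to GAP and CHEVIE, enumerate conjugacy class representatives $w$, filter to those that are parabolic quasi-Coxeter (e.g.\ by testing whether some reduced reflection factorization generates a parabolic subgroup of rank $\codimfix(w)$), and directly compare the two intervals. It suffices, in light of Remark~\ref{rem:exceptional}, to verify that the six problematic conjugacy classes in $G_{31}$, $G_{32}$, and $G_{34}$ are not parabolic quasi-Coxeter.

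The substantive case is $W = G(m,m,n)$, which we plan to handle via Theorem~\ref{thm:main}: condition (1) there is given, and it remains to verify condition (2) on cycle weights. For this we use the structural description of $w$ as a quasi-Coxeter element of the parabolic $W' := \op{stab}(\fix(w))$. Parabolic subgroups of $G(m,m,n)$ are, up to conjugation, of the form $G(m,m,k) \times \Symm_{n_1} \times \cdots \times \Symm_{n_r}$ (with the $G(m,m,k)$ factor possibly absent), and $w$ decomposes correspondingly as a commuting product $w_0 w_1 \cdots w_r$ in which each $w_i$ for $i \geq 1$ is an $n_i$-cycle of weight $0$ (a Coxeter element of $\Symm_{n_i}$), while $w_0$ is a quasi-Coxeter element of $G(m,m,k)$. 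The cycles coming from the $w_i$ contribute only weight-$0$ entries to the multiset of cycle weights of $w$ and so are inert for condition (2); the entire content lies in the cycle structure of $w_0$.

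The main obstacle is thus the combinatorial assertion that a quasi-Coxeter element of $G(m,m,k)$ satisfies the cycle-weight property required by condition (2) of Theorem~\ref{thm:main}. The plan is to argue by contradiction: a ``bad'' sub-multiset of cycle weights of $w_0$ summing to $0 \pmod m$ but failing to split into weight-$0$ singletons and opposite-weight pairs should correspond to a nontrivial invariant-subspace decomposition of the support of $w_0$ compatible with a reduced reflection factorization, forcing that factorization into a proper product-type reflection subgroup of $G(m,m,k)$ and contradicting the quasi-Coxeter hypothesis. A cleaner route, if available, would be a direct comparison between the combinatorial characterization of parabolic quasi-Coxeter elements of $G(m,m,n)$ known from \cite{DLM2} (or its combinatorial refinements) and conditions (1) and (2) of Theorem~\ref{thm:main}; we expect these two characterizations to coincide, in which case the theorem would follow immediately.
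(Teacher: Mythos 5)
Your overall architecture --- reduce to irreducible well generated groups, dispose of the real groups and $G(m,1,n)$ where the two orders coincide globally, handle the exceptional groups by computer, and attack $G(m,m,n)$ through Theorem~\ref{thm:main} --- is exactly the paper's. The reduction to irreducibles, the inclusion $[\id,w]_{\lR} \subseteq [\id,w]_{\cdf}$ via \cite[Prop.~3.3]{DLM2} and Proposition~\ref{prop:inclusion}, and the observation that for the exceptional groups it suffices to rule out the problematic classes of Remark~\ref{rem:exceptional} are all sound (two of the six classes lie in the badly generated group $G_{31}$ and can be discarded outright; for the rest, the paper invokes \cite[Thm.~5.6]{LW} so that exhibiting a \emph{single} reduced reflection factorization generating a proper subgroup certifies that the element is not parabolic quasi-Coxeter, which is what makes your proposed filter computationally feasible).

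The genuine gap is the case $W = G(m,m,n)$, which you correctly identify as the substantive one but do not actually prove. You offer a plan (a contradiction argument in which a ``bad'' sub-multiset of cycle weights of the quasi-Coxeter factor $w_0$ would force a reduced factorization into a proper reflection subgroup) and a hoped-for shortcut (``we expect these two characterizations to coincide''), but neither is carried out, and the intermediate structural claims --- that parabolic subgroups of $G(m,m,n)$ are conjugate to $G(m,m,k) \times \Symm_{n_1} \times \cdots \times \Symm_{n_r}$ and that $w$ decomposes accordingly --- are asserted without justification. The shortcut is in fact available and is what the paper uses: \cite[Cor.~3.16]{DLM2} states that the parabolic quasi-Coxeter elements of $G(m,m,n)$ are precisely the elements with at most two cycles of nonzero weight, those weights being primitive modulo $m$ and summing to $0$. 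From this, conditions (1) and (2) of Theorem~\ref{thm:main} are immediate: the multiset of cycle weights not divisible by $m$ is either empty or a single pair summing to $0$, and any subset of cycle weights summing to $0 \pmod{m}$ must contain both nonzero-weight cycles or neither, since a subset containing exactly one would have sum a primitive (hence nonzero) residue. Your contradiction plan, by contrast, would essentially amount to reproving this classification from scratch; as written it is a sketch of intent rather than a proof, so the argument is incomplete precisely where the theorem has content.
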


\begin{proof}
Since reducible well generated groups are products of irreducible well generated groups, and an element is parabolic quasi-Coxeter for a product if and only if each of its components is parabolic quasi-Coxeter in the corresponding factor, the result holds if it holds for irreducible groups.  We proceed case-by-case.

By \cite[Cor.~3.16]{DLM2}, the parabolic quasi-Coxeter elements of the groups in the infinite family may be characterized combinatorially as follows: in $G(m, 1, n)$, they are the elements that have at most one cycle of nonzero weight, and if there is such a cycle, its weight must be primitive modulo $m$; in $G(m, m, n)$, they are the elements that have at most two cycles of nonzero weight, and if there are cycles of nonzero weight, their weights must be primitive modulo $m$ (and must sum to $0$, by the definition of $G(m, m, n)$).  It is an immediate consequence of Theorem~\ref{thm:main} that all such elements satisfy $[\id, w]_{\lR} = [\id, w]_{\cdf}$.

As observed in Remark~\ref{rem:exceptional}, there are only six conjugacy conjugacy classes of elements $w$ in the exceptional groups that satisfy $\lR(w) = \codimfix(w)$ but $[\id, w]_{\lR} \neq [\id, w]_{\cdf}$.  Of these, two belong to the badly generated group $G_{31}$, so we discard them.  For each of the remaining four pairs $(w, W)$, we used Sage/GAP/Chevie to first verify that $\lR(w) = \rank(W)$, then to produce a minimum-length reflection factorization of $w$, and finally to confirm that the factors in this factorization generate a proper subgroup of $W$.  By \cite[Thm.~5.6]{LW}, it follows that \emph{every} minimum-length reflection factorization of $w$ generates a proper subgroup, and hence that $w$ is not parabolic quasi-Coxeter for $W$.  Taking the contrapositive, every parabolic quasi-Coxeter element $w$ in an exceptional group $W$ satisfies $[\id, w]_{\lR} = [\id, w]_{\cdf}$.
\end{proof}

In the case of \emph{real} reflection groups, the parabolic quasi-Coxeter elements are precisely the elements whose minimum-length reflection factorizations form a single orbit under the \emph{Hurwitz action} (a natural action of the braid group on factorizations) \cite{BGRW}.  This equivalence does not hold in the complex case (see \cite[Ex.~4.3]{LW}), but one can ask whether the Hurwitz-transitive elements share the nice properties of parabolic quasi-Coxeter elements in this context.  The answer to this question is negative; for example, $w := [\id;(1,2,3,-1,-2,-3)]\in G(7,7,6)$ has $\lR(w) = \codimfix(w)$ and is Hurwitz-transitive, but $[\id, w]_{\cdf} \neq [\id, w]_{\lR}$.

\subsection{The main theorem in the cases that $\lR = \codimfix$ on $W$}

In the case that $W$ satisfies $\lR(w) = \codimfix(w)$ for all $w \in W$, of course the posets defined by the two functions are the same, and hence the lower intervals they determine are also the same.  We quickly record how to read this fact from the statement of our main Theorem~\ref{thm:main}.  The group $G(m, p, n)$ satisfies $\lR(w) = \codimfix(w)$ for all $w$ in the follows cases:

In the case that $p = 1$, the first condition in~\Cref{thm:main} is vacuous (every cycle weight is $0 \pmod{p}$) and the second condition is trivial (every subset is a disjoint union of singletons).

In the case that $m = p = 2$, both conditions amount to the observation that when a sum of integers is even, there must be an even number of odd summands, and hence the odd summands can be paired off.

In the case that $m = p$ and $n = 2$, the element $w$ can have either one cycle of weight $0$ or two cycles whose weights sum to $0$, so the partition in question is always the one that includes all cycles in a single part.

\subsection{Subadditive functions on the symmetric group}\label{sec:experiment}

The general results of Section~\ref{sec:general} raise the question of whether there are other natural subadditive functions on interesting groups.  The FindStat database \cite{FindStat} includes a large number of interesting statistics on the symmetric group and hyperoctahedral group.  Martin Rubey reports (personal communication) the following results of a comprehensive examination of these statistics.

Of the $400$ permutation statistics in FindStat (as of January 15, 2024), precisely $22$ of them are subadditive when restricted to $\Symm_n$ for $n \leq 6$ and take the value $0$ only on the identity.  Of these $22$, most are ``trivially'' subadditive: nine (namely, 
\FindStat{St000018}
, \FindStat{St000216}
, \FindStat{St000670}
, \FindStat{St001076}
, \FindStat{St001077}
, \FindStat{St001079}
, \FindStat{St001080}
, \FindStat{St001375}
, and \FindStat{St001760}
) are explicitly defined as a length function on $\Symm_n$ with respect to some generating set; two (\FindStat{St000673} 
 and \FindStat{St000829}
) are defined as a distance from the origin in an appropriate metric (as in Remark~\ref{rem:metric}; respectively, the Hamming distance and the Ulam distance, see \cite[\S2]{ChatterjeeDiaconis}); one (\FindStat{St000029}, the \emph{depth} of a permutation---see \cite{PT}) is a weighted length function, i.e., it is the minimum cost of a factorization in terms of some generating set, where the generators may have different costs; and another (\FindStat{St000830}
) has the property that $\text{St000830}(w) = 2 \cdot \text{St000029}(w)$ for every permutation $w$, hence inherits subadditivity from St000029.  This leaves nine statistics that seem to be subadditive for what might be called non-trivial reasons:
\begin{itemize}
\item \FindStat{St000019}, the cardinality of the support of a permutation (in the sense of reduced words),
\item \FindStat{St000141}, the maximum drop size of a permutation,
\item \FindStat{St000155}, the number of excedances of a permutation,
\item \FindStat{St000316}, the number of non-left-to-right-maxima of a permutation,
\item \FindStat{St000653}, the last descent of a permutation,
\item \FindStat{St000703}, the number of deficiencies of a permutation,
\item \FindStat{St000956}, the maximal displacement of a permutation,
\item \FindStat{St001569}, the maximal modular displacement of a permutation, and
\item \FindStat{St001759}, the Rajchgot index of a permutation.
\end{itemize}
The case of the excedance number 
\[
\exc(w) := \# \{i \in [n - 1] \colon w(i) > i\}
\]
is particularly striking.
\begin{prop}\label{prop:exc}
    A permutation $w$ has $\exc(w) = k$ if and only if $w$ can be written as a product of $k$ $1$-excedance permutations, and no fewer; that is, $\exc$ is the length function for $\Symm_n$ generated by $1$-excedance permutations.  Furthermore, the poset $(\Symm_n, \leq_{\exc})$ defined (as in Proposition~\ref{prop:poset}) by $x \leq_{\exc} y$ if and only if $\exc(x) + \exc(x^{-1}y) = \exc(y)$ is graded, with rank sizes given by the Eulerian numbers, and has unique maximal element $2 3 \cdots n 1 = (1 \ 2 \ \cdots \ n)$.
\end{prop}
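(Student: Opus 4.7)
My plan is to verify the three assertions of Proposition~\ref{prop:exc} in sequence.

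First I would establish subadditivity of $\exc$.  Given an excedance $i$ of $uv$, either $v(i) > i$ (so $i$ itself is an excedance of $v$), or else $v(i) \leq i$, and then $u(v(i)) > i \geq v(i)$ makes $v(i)$ an excedance of $u$.  Since $v$ is injective the two sources contribute disjointly, yielding $\exc(uv) \leq \exc(u) + \exc(v)$, and hence $\exc(w) \leq \ell_T(w)$ for every $w$ (for $T$ the set of one-excedance permutations, which generates $\Symm_n$ since it contains every transposition).

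Next I would identify $c = (1\ 2\ \cdots\ n)$ as the unique maximum.  For arbitrary $w \in \Symm_n$, we have $(w^{-1}c)(i) = w^{-1}(i+1)$ for $i < n$ and $(w^{-1}c)(n) = w^{-1}(1) \leq n$, so the excedances of $w^{-1}c$ are the positions $i < n$ with $w^{-1}(i+1) > i$, equivalently the positions $j$ satisfying $w(j) \leq j$ and $w(j) \neq 1$.  There are exactly $n - 1 - \exc(w)$ such positions, so $\exc(w) + \exc(w^{-1}c) = n - 1 = \exc(c)$ and hence $w \leq_\exc c$.  Uniqueness follows because $\exc(w) \leq n - 1$ always (position $n$ can never be an excedance), with equality forcing $w(i) = i + 1$ for every $i < n$.

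The main technical step is to show $\ell_T(w) \leq \exc(w)$ by exhibiting a factorization of $w$ into $\exc(w)$ one-excedance permutations.  Since $\exc$ is additive under disjoint cycle products, it suffices to treat a single cycle $C = (c_1\ c_2\ \cdots\ c_m)$ with $c_1 = \min$.  If $\exc(C) = 1$, then $C$ itself has the form $c_1 < c_2 > c_3 > \cdots > c_m > c_1$ and is already a one-excedance cycle.  If $\exc(C) \geq 2$, let $j^*$ be the position of the second ascent in the cyclic sequence $c_1, c_2, \ldots, c_m$, so that $c_1 < c_2 > c_3 > \cdots > c_{j^*} < c_{j^*+1}$.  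I would build a one-excedance factor $s$ supported on $\{c_1, c_2, \ldots, c_{j^*}\}$, possibly extended by trailing elements $c_m, c_{m-1}, \ldots$ that continue the descending pattern after $c_{j^*}$.  A direct computation shows that $s^{-1}C$ fixes every element of the support of $s$ save one and acts as a cycle on a strictly smaller set of positions, with $\exc(s^{-1}C) = \exc(C) - 1$, completing the induction.

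With these three inputs the remaining claims are immediate: the two inequalities combine to $\exc = \ell_T$, and Proposition~\ref{prop:graded} then supplies the $\exc$-gradation of $(\Symm_n, \leq_\exc)$, so the rank sizes are by definition the Eulerian numbers.  The principal obstacle lies in the third step: depending on the relative order of $c_2$, $c_{j^*}$, $c_{m-1}$, and $c_m$, the support of $s$ may need to be extended by progressively more of the tail to avoid the wrap-around creating new excedances, and verifying that some such extension always yields the reduction $\exc(s^{-1}C) = \exc(C) - 1$ requires the most delicate case analysis of the argument.
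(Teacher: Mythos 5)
Your subadditivity argument is the same as the paper's, and your treatment of the maximal element is actually \emph{cleaner} than the paper's: you compute directly that $\exc(w)+\exc(w^{-1}c)=n-1=\exc(c)$ for $c=(1\ 2\ \cdots\ n)$ and every $w$, so that $c$ is the greatest element of $(\Symm_n,\leq_{\exc})$, whereas the paper only asserts that "it is straightforward to adjust" its factorization argument to rule out other maximal elements. (Your count is correct: the excedances of $w^{-1}c$ biject with the positions $j$ satisfying $w(j)\leq j$ and $w(j)\neq 1$, of which there are $n-\exc(w)-1$.)

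The genuine gap is in the central step, the inequality $\ell_T(w)\leq\exc(w)$, which is exactly the part you defer. Your plan --- reduce to a single cycle, locate the second cyclic ascent at position $j^*$, and peel off a one-excedance factor $s$ supported on $\{c_1,\dots,c_{j^*}\}$ "possibly extended by trailing elements" --- is not a proof: the factor $s$ is never pinned down, the claim $\exc(s^{-1}C)=\exc(C)-1$ is asserted rather than verified, and you yourself flag the required case analysis (on the relative order of $c_2$, $c_{j^*}$, $c_{m-1}$, $c_m$ and how far the tail must extend) as unfinished. Since this inequality is the entire content of the claim that $\exc$ is a length function, the proposal as written does not establish the proposition. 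The paper avoids the case analysis entirely with a greedy argument that also needs no reduction to single cycles: if $w(a_1)>a_1$, there is some $a_2>a_1$ with $w(a_2)\leq a_1$ (else too many values would exceed $a_1$); set $w'=w(a_1\ a_2)$, and if $a_2$ has become an excedance of $w'$, repeat to find $a_3>a_2$ with $w'(a_3)\leq a_2$, and so on. The chain $a_1<a_2<\cdots$ must terminate, producing $v=w(a_1\ a_2\ \cdots\ a_k)$ with $\exc(v)=\exc(w)-1$ and hence $w=v\cdot(a_k\ \cdots\ a_1)$ with $(a_k\ \cdots\ a_1)$ a one-excedance cycle; induction finishes the proof. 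I would recommend replacing your sketched cycle-surgery with this argument (or carrying out your case analysis in full, which I expect is possible but strictly more work).
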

\begin{remark}
    It is easy to see that the $1$-excedance permutations are precisely those whose cycle notation consists of a single nontrivial cycle $(a_1 \ a_2 \ \cdots \ a_k)$ in which the cyclic order can be chosen such that $a_1 > a_2 > \ldots > a_k$.
\end{remark}
\begin{proof}[Proof of Proposition~\ref{prop:exc}]
First, suppose $x$ and $y$ are permutations and that $x y$ has excedences $i_1$, \ldots, $i_k$.  Thus $x(y(i_1)) > i_1$, \ldots, $x(y(i_k)) > i_k$.  Observe that for $j = 1, \ldots, k$, we cannot have both $y(i_j) \leq i_j$ and $x(y(i_j)) \leq y(i_j)$; thus, for each $j$, either $i_j$ is an excedence of $y$ or $y(i_j)$ is an excedance of $x$.  Thus, $\exc(x) + \exc(y) \geq k = \exc(xy)$, so the excedance number is subadditive.  It follows that a permutation with $k$ excedances cannot be written as a product of fewer than $k$ $1$-excedance permutations.  To show that $\exc$ is a length function, it remains to show that each permutation with $k$ excedances is a product of $k$ $1$-excedance permutations.  We now explicitly construct such a factorization.

Consider the following Elementary Fact: for any permutation $w$ and any position $a$, if $w(a) > a$ then there must be some position $b > a$ with $w(b) \leq a$.  Step 1: Suppose that $w$ has an excedance in position $a_1$.  By the Elementary Fact, there is some position $a_2 > a_1$ with $w(a_2) \leq a_1$.  Let $w' = w (a_1 \ a_2)$ (and observe that $w(a_1) = w'(a_2)$).  If $w'(a_2) \leq a_2$ then go on to Step 2.  Otherwise, by the Elementary Fact, there is some position $a_3 > a_2$ with $w'(a_3) \leq a_2$.  Let $w'' = w' (a_2 \ a_3)$ (and observe that $w(a_1) = w''(a_3)$).  If $w''(a_3) \leq a_3$ then go to Step 2.  Otherwise continue in the same way.  Step 2: We have eventually produced a new permutation $v = w (a_1 \ a_2 \ \cdots \ a_k)$ with $a_1 < \ldots < a_k$ and $v(a_k) \leq a_k$.  By construction, $v$ has one fewer excedance than $w$, and so $w = v \cdot (a_k \ \cdots \ a_1)$ is an expression for $w$ as a product of a $(\exc(w) - 1)$-excedance permutation with a $1$-excedance permutation.  By induction, $w$ can be written as a product of $\exc(w)$ $1$-excedance permutations, as claimed.

It is easy to see that the permutation $2 3 \cdots n 1 = (1 \ 2 \ \cdots \ n)$ is the unique element of $\Symm_n$ with $n - 1$ excedances, and it is straightforward to adjust the argument of the previous paragraph to show that no other element of $\Symm_n$ is maximal in the $\leq_{\exc}$-order.  The fact that the poset $(\Symm_n, \leq_{\exc})$ is graded with rank sizes given by Eulerian numbers follows immediately from Proposition~\ref{prop:graded} and the well known distribution of excedances \cite[\S1.3]{Petersen}.
\end{proof}

Of course the same analysis applies to deficiencies.  Of the remaining statistics, the Rajchgot index $\op{raj}(w)$ seems particularly intriguing.  This permutation statistic may be defined as the degree of the Grothendieck polynomial $\mathfrak{G}_w$, or by the formula  
\[
\op{raj}(w) := \max \{ \op{maj}(v) \colon v \preceq w \},
\]
where $\op{maj}$ denotes the major index and $\preceq$ denotes the right weak order on $\Symm_n$ \cite{PSW}.
We see no obvious reason that $\op{raj}$ should be subadditive.

For the group $\Symm^{\pm}_n = G(2, 1, n)$ of signed permutations, viewed as bijections from $\{\pm 1, \ldots, \pm n\}$ to itself that satisfy $w(-i) = -w(i)$ for all $i$, FindStat contains $45$ statistics.  Of these, Rubey's calculations show that four are subadditive when restricted to $\Symm^{\pm}_n$ for $n \leq 4$ and have the property that $f(w) = 0 \Leftrightarrow w = \id$.  These properties are immediate from the definition for three of them: \FindStat{St001428} (the Coxeter length, from which one recovers the weak order), \FindStat{St001769} (the reflection length, from which one recovers the absolute order), and \FindStat{St001894} (the depth).  The remaining example is \FindStat{St001907}, an analog $\exc_B$ of the excedance statistic\footnote{Several other excedance-like statistics have been proposed for the group of signed permutations, see \cite[Rem.~6.2]{BHS}.} defined by Bastidas--Hohlweg--Saliola \cite{BHS}.  It is defined by
\[
\exc_B(w) = \left\lfloor \frac{2 \cdot \# \{i \in [n - 1] \colon w(i) > i\} + \#\{i \in [n] \colon w(i) < 0\} + 1}{2}\right\rfloor.
\]
Our calculations for $n \leq 4$ suggest that, as for the usual excedance statistic $\exc$ on $\Symm_n$, $\exc_B$ is in fact a length function on $\Symm^{\pm}_n$ with respect to the generating set $\{ w \in \Symm^{\pm}_n \colon \exc_B(w) = 1\}$ and so (by Proposition~\ref{prop:graded}) that the associated poset is graded by $\exc_B$, with rank sizes given by the type-B Eulerian numbers and a unique maximal element $2 \ 3\ \cdots\  n\  \ol{1}$.  We have not attempted to prove these assertions.

We also remark that Rubey's calculations show that, with the exception of the Coxeter length, none of these statistics produce a lattice; the Coxeter length recovers the weak order.

\section*{Acknowledgements}
We thank Theo Douvropoulos, William Dugan, Alejandro Morales, Ga Yee Park, and Jesse Selover for interesting discussions, and Ivan Marin for his assistance in navigating the literature.  We are grateful to Martin Rubey for conceiving of the experiments described in Section~\ref{sec:experiment}, sharing the results of these experiments with us, and permitting us to describe them here.

\bibliography{my}

\end{document}